\newtheorem{theorem}{Theorem}
\newtheorem{lemma}{Lemma}
\newtheorem{corollary}{Corollary}
\newtheorem{definition}{Definition}
\newtheorem{note}{Note}
\title{The Number of Parts in the (Distinct) Partitions with Parts from a Set}
\author{A.~David Christopher \\ 
Department of Mathematics, The American College, Madurai, Tamil Nadu, India \\
\texttt{davidchristopher@americancollege.edu.in}
}
\date{} % leave empty for arXiv
\begin{document}
\maketitle

\begin{abstract}
The number of parts in the partitions (resp.\ distinct partitions) of $n$ with parts from a set were considered. Its generating functions were obtained. Consequently, we derive several recurrence identities for the following functions: the number of prime divisors of $n$, $p$-adic valuation of $n$, the number of Carlitz-binary compositions of $n$ and the Hamming weight function. Finally, we obtain an asymptotic estimate for the number of parts in the partitions of $n$ with parts from a finite set of relatively prime integers.
\end{abstract}

\section{Motivation and Basic Definitions}

One can find a list of papers \cite{charlotte,erdos,hsien,lee,loxton,william} in the literature concerning the distribution of a given type of parts in a given class of partitions. 
In this article, we define the \emph{number of parts function} in two major classes of partitions, namely, in the partitions with parts from a given set and in the distinct partitions with parts from a given set. We derive their generating functions. All the results of this paper are obtained as the consequence of these generating functions. 
In Section~2, we present several recurrence identities for the following list of functions: the number of Carlitz-binary compositions of $n$, the number of prime divisors of $n$, $p$-adic valuation of $n$ and the Hamming weight function. 

In Section 3, we obtain an asymptotic estimate for the number of parts in the partitions of $n$ with parts from a finite set of relatively prime positive integers. 

The main classes of partitions and compositions of this paper are given in the following definition. 
\begin{definition}
\normalfont 
Let $n$ be a positive integer. By a partition (resp. composition) of $n$, we mean an unordered (resp. ordered) sequence of positive integers whose sum equals $n$. Each summand in the sum is called a part. 
\begin{enumerate}
\item 
A partition of $n$ is said to be a \emph{distinct} partition of $n$ if each part of it is different from the other parts. 
\item 
A partition (resp. composition) of $n$ is said to be a \emph{binary} partition (resp. composition) of $n$ if each part of it is of the form $2^k$ with integer $k\geq 0$.
\item 
A composition of $n$ is said to be a \emph{Carlitz} composition of $n$ if each part of it is different from its adjacent parts. 
\item 
A composition of $n$ is said to be a \emph{Carlitz-binary} composition of $n$ if it is both Carlitz and binary. 
\end{enumerate} 
\end{definition}
We introduce the \emph{number of parts function} which will be used for deriving several interesting results.  
\begin{definition}
\normalfont 
Let $n$ be a positive integer and let $A$ be a set of positive integers.
\begin{enumerate}
\item 
The function $N^p_A(n)$ is defined to be the number of parts in the partitions of $n$ with parts from the set $A$.
\item The function $N^q_A(n)$ is defined to be the number of parts in the distinct partitions of $n$ with parts from the set $A$.
\end{enumerate}
\end{definition}
The list of partition functions and divisor-sum functions involved in the recurrence identities of this paper are given in the following definition.
\begin{definition}
\normalfont 
Let $n$ be a positive integer and let $A$ be a set of positive integers. 
\begin{enumerate}
\item The function $p_A(n)$ (resp. $q_A(n)$) is defined to be the number of partitions (resp. distinct partitions) of $n$ with parts from the set $A$.
\item The function $p_A^e(n)$ (resp. $p_A^o(n)$) is defined to be the number of partitions of $n$ with even (resp. odd) number of parts from the set $A$.
\item  The function $q_A^e(n)$ (resp. $q_A^o(n)$) is defined to be the number of distinct partitions of $n$ with even (resp. odd) number of parts from the set $A$.
\item The function $\tau_A(n)$ is defined by 
\[\tau_A(n)=\sum_{\substack{a\mid n\\ a\in A}}1.
\]
\item The function $\tau_A^s(n)$ is defined by 
\[\tau_A^s(n)=\sum_{\substack{a\mid n\\ a\in A}}(-1)^{\frac{n}{a}-1}.
\]
\item The function $\sigma_A(n)$ is defined by 
\[\sigma_A(n)=\sum_{\substack{a\mid n\\ a\in A}}a.
\]
\item The function $\sigma_A^s(n)$ is defined by 
\[\sigma_A^s(n)=\sum_{\substack{a\mid n\\ a\in A}}(-1)^{\frac{n}{a}-1}a.
\]
\item The Hamming weight function, denoted $h(n)$, is defined to be the number of ones in the binary representation of $n$. 
\item Let $p$ be a prime number and let $q$ be a rational number. The $p$-adic valuation of $q$, denoted $\vartheta_p(q)$, is defined by $\vartheta_p(q)=r$, where $q=p^r\frac{a}{b}$ with $\gcd(a,b)=1$.
\end{enumerate}
\end{definition}
Throughout this article, we assume $x$ to be a real variable with $|x|<1$. 
It is well-known and easy to prove that
\begin{equation}\label{gf1}
\sum_{n=0}^{\infty}p_A(n)x^n=\prod_{a\in A}(1-x^a)^{-1}
\end{equation}
with $p_A(0)=1$,
\begin{equation}\label{gf2}
\sum_{n=0}^{\infty}q_A(n)x^n=\prod_{a\in A}(1+x^a)
\end{equation}
with $q_A(0)=1$,
\begin{equation}\label{gf3}
\sum_{n=0}^{\infty}\left(p_A^e(n)-p_A^o(n)\right)x^n=\prod_{a\in A}(1+x^a)^{-1}
\end{equation}
with $p_A^e(0)=1$ and $p_A^o(n)=0$
\newline 
and
\begin{equation}\label{gf4}
\sum_{n=0}^{\infty}\left(q_A^e(n)-q_A^o(n)\right)x^n=\prod_{a\in A}(1-x^a)
\end{equation}
with $q_A^e(0)=1$ and $q_A^o(0)=0$.

From the Lambert series expansion one can readily get the following two generating functions:
\begin{equation}\label{gf5}
\sum_{n=1}^{\infty}\tau_A(n)x^n=\sum_{a\in A}\frac{x^a}{1-x^a},
\end{equation}
\begin{equation}\label{gf6}
\sum_{n=1}^{\infty}\sigma_A(n)x^n=\sum_{a\in A}\frac{ax^a}{1-x^a}.
\end{equation}
Since $|x|<1$, we have
\[\frac{x^a}{1+x^a}=x^a-x^{2a}+x^{3a}-\cdots .
\]
This gives 
\[\sum_{a\in A}\frac{x^a}{1+x^a}=\sum_{a\in A}(x^a-x^{2a}+x^{3a}-\cdots).
\]
In the right side sum, every exponent of $x$ is of the form $n=ka$ for some $a\in A$. Thus, the coefficient of $x^n$ in this sum is 
\[
\sum_{k}(-1)^{k-1}=\sum_{\substack{a\in A\\ a\mid n}}(-1)^{\frac{n}{a}-1}=\tau_A^s(n).
\]
Consequently, we have
\begin{equation}\label{gf7}
\sum_{n=1}^{\infty}\tau_A^s(n)=\sum_{a\in A}\frac{x^a}{1+x^a}.
\end{equation}
Similar argument gives
\begin{equation}\label{gf8}
\sum_{n=1}^{\infty}\sigma_A^s(n)=\sum_{a\in A}\frac{ax^a}{1+x^a}.
\end{equation}

\section{ Generating Functions for $N_A^p(n)$ and $N_A^q(n)$, and Six ways to Convolution and Recurrence Identities}
\subsection{ Derivation with a Direct Application}
In this subsection, we derive generating functions for $N^p_A(n)$ and $N^q_A(n)$ by defining appropriate bijective maps. 
\begin{theorem}\label{iden-1}
Let $n$ be a positive integer and let $A$ be a set of positive integers. We have
\begin{description}
\item{(a)}
\begin{equation}
\sum_{n=1}^{\infty}N^p_A(n)x^n=\left(\prod_{a\in A}\frac{1}{1-x^a}\right)\left(\sum_{b\in A}\frac{x^b}{1-x^b}\right),
\end{equation}
\item{(b)}
\begin{equation}
N^p_A(n)=\sum_{k=0}^{n-1}p_A(k)\tau_A(n-k),
\end{equation}
\item{(c)}
\begin{equation}
\sum_{n=1}^{\infty}N^q_A(n)x^n=\left(\prod_{a\in A}(1+x^a)\right)\left(\sum_{b\in A}\frac{x^b}{1+x^b}\right),
\end{equation}
\item{(d)}
\begin{equation}
N^q_A(n)=\sum_{k=0}^{n-1}q_A(k)\tau_A^s(n-k).
\end{equation}
\end{description}
\end{theorem}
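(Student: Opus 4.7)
The plan is to prove (a) by a short combinatorial decomposition and then obtain (b) as an immediate Cauchy product; parts (c) and (d) follow the same pattern in the distinct setting.

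For (a), I would first write
\[
N^p_A(n)=\sum_{\lambda}\sum_{b\in A}m_b(\lambda)
       =\sum_{b\in A}\sum_{\lambda}m_b(\lambda),
\]
where $\lambda$ ranges over the partitions of $n$ with parts from $A$ and $m_b(\lambda)$ denotes the multiplicity of $b$ in $\lambda$. Using $m_b(\lambda)=\sum_{j\geq 1}\mathbf{1}[m_b(\lambda)\geq j]$ together with the obvious bijection ``remove $j$ copies of $b$,'' which gives $|\{\lambda:m_b(\lambda)\geq j\}|=p_A(n-jb)$, I obtain $N^p_A(n)=\sum_{b\in A}\sum_{j\geq 1}p_A(n-jb)$. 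Converting to generating functions and invoking \eqref{gf1} yields
\[
\sum_{n\geq 1}N^p_A(n)x^n
 =\Bigl(\sum_{k\geq 0} p_A(k)x^k\Bigr)\sum_{b\in A}\frac{x^b}{1-x^b}
 =\Bigl(\prod_{a\in A}\frac{1}{1-x^a}\Bigr)\sum_{b\in A}\frac{x^b}{1-x^b},
\]
which is (a). Part (b) is then the Cauchy product of the two factors in (a) read through \eqref{gf1} and \eqref{gf5}; because $\tau_A(0)=0$ (the sum in \eqref{gf5} has no constant term) the convolution range trims to $0\leq k\leq n-1$.

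For (c), the same idea is cleaner because each $b$ appears with multiplicity $0$ or $1$: one has $N^q_A(n)=\sum_{b\in A}|\{\lambda\text{ distinct with parts in }A,\ b\in\lambda\}|$, and the map $\lambda\mapsto\lambda\setminus\{b\}$ identifies such $\lambda$ with a distinct partition of $n-b$ with parts from $A\setminus\{b\}$. The generating function of the inner count is $x^b\prod_{a\neq b}(1+x^a)=\bigl(\prod_{a\in A}(1+x^a)\bigr)\frac{x^b}{1+x^b}$, and summing over $b\in A$ gives (c). Part (d) follows from the Cauchy product of \eqref{gf2} and \eqref{gf7}, again trimmed by $\tau_A^s(0)=0$.

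The only real bookkeeping is verifying the vanishing of the constant terms in \eqref{gf5} and \eqref{gf7}, which is what tightens the convolution ranges in (b) and (d); past that, the whole statement follows essentially mechanically once the double counting in (a) and the multiplicity-$\{0,1\}$ version in (c) are set up. There is no genuinely hard step here; the main thing to be careful about is the swap of the double sum over $\lambda$ and $b$ (justified by finite support for each fixed $n$) and the fact that the ``remove $j$ copies of $b$'' bijection lands inside the class of partitions with parts from $A$, not from a smaller set, so $p_A(n-jb)$ is indeed the correct count.
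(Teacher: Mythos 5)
Your proof is correct and follows essentially the same route as the paper: decompose $N^p_A(n)$ (resp.\ $N^q_A(n)$) as a sum over $b\in A$ of the total multiplicity of $b$, compute the generating function of each piece via a removal bijection, and read off (b) and (d) as Cauchy products. The only (harmless) variation is in (a), where you count partitions with \emph{at least} $j$ copies of $b$ to get $\sum_{j\ge 1}p_A(n-jb)$ directly, while the paper counts \emph{exactly} $k$ copies to get $\sum_{k\ge 1}k\,p_{A\setminus\{b\}}(n-kb)$; both yield the same factor $\frac{x^b}{1-x^b}\prod_{a\in A}(1-x^a)^{-1}$.
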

\begin{proof}
Let $b\in A$. Let $P_A^n$ be the set of partitions of $n$ with parts from the set $A$. Let $N_b^p(n)$ be the number of times $b$ occurs in $P_A^n$. Consider the mapping
\[(b_1,b_2,\cdots ,b_s)\to (b_1,b_2,\cdots ,b_s,b,b,\cdots ,(\text{k times})\ b)
\]
with $b_i\in A\setminus\{b\}$ and $b_1+b_2+\cdots +b_s+kb=n$.

We observe that this mapping establishes a one-to-one correspondence between $P_{A\setminus\{b\}}^{n-kb}$ and the set of partitions of $n$ with  part $b$ occurring exactly $k$ times and parts from the set $A$. This gives \[N_b^p(n)=p_{A\setminus\{b\}}(n-b)+2p_{A\setminus\{b\}}(n-2b)+3p_{A\setminus\{b\}}(n-3b)+\cdots.\]
Now, we have 
\begin{align*}
\sum_{n=1}^{\infty}N_b^p(n)x^n&=\sum_{n=1}^{\infty}\left(\sum_{k\geq 1}kp_{A\setminus\{b\}}(n-kb)\right)x^n\\ &=(x^b+2x^{2b}+\cdots)\left(\sum_{n=0}^{\infty}p_{A\setminus\{b\}}(n)x^n\right)\\
&=\frac{x^b}{(1-x^b)^2}\prod_{a\in A\setminus\{b\}}\frac{1}{1-x^a}\\
&=\frac{x^b}{1-x^b}\prod_{a\in A}\frac{1}{1-x^a}.
\end{align*}
Since $N_A^p(n)=\sum_{b\in A}N_b^p(n)$, we have
\begin{align*}
\sum_{n=1}^{\infty}N^p_A(n)x^n&=\sum_{n=1}^{\infty}\left(\sum_{b\in A}N_b^p(n)\right)x^n\\ &=\sum_{b\in A}\sum_{n=1}^{\infty} N_b^p(n)x^n\\ &=\sum_{b\in A}\left(\frac{x^b}{1-x^b}\prod_{a\in A}\frac{1}{1-x^a}\right)\\ &=\left(\prod_{a\in A}\frac{1}{1-x^a}\right)\left(\sum_{b\in A}\frac{x^b}{1-x^b}\right)
\end{align*}
as expected in (a).

Substituting (\ref{gf1}) and (\ref{gf5}) in (a), and equating the coefficients of $x^n$ on both sides gives (b).

Let $b\in A$. Let $Q_A^n$ be the set of all distinct partitions of $n$ with parts from the set $A$. Let $N_b^q(n)$ be the number of times $b$ occurs in $Q_A^n$. Consider the mapping
\[(b_1,b_2,\cdots,b_s)\to (b_1,b_2,\cdots ,b_s,b)
\]
with $b_i\in A\setminus \{b\}$ and $b_1+b_2+\cdots +b_s+b=n$. This mapping establishes a one-to-one correspondence between $Q_{A\setminus\{b\}}^{n-b}$ and the set of distinct partitions of $n$ in $Q_{A}^n$ having $b$ as a part. This gives $N_b^q(n)=|Q_{A\setminus\{b\}}^{n-b}|=q_{A\setminus \{b\}}(n-b)$. Now, we have
\begin{align*}
\sum_{n=1}^{\infty}N_b^q(n)x^n&=\sum_{n=1}^{\infty}q_{A\setminus\{b\}}(n-b)x^n
\\&=x^b\sum_{n=0}^{\infty}q_{A\setminus\{b\}}(n)x^n\\
\
&=x^b\prod_{a\in A\setminus\{b\}}(1+x^a) \\
&=\frac{x^b}{1+x^b}\prod_{a\in A}(1+x^a).
\end{align*}

Since $N_A^q(n)=\sum_{b\in A}N_b^q(n)$, we have
\begin{align*}
\sum_{n=1}^{\infty}N^q_A(n)x^n&=\sum_{n=1}^{\infty}\left(\sum_{b\in A}N_b^q(n)\right)x^n\\ &=\sum_{b\in A}\sum_{n=1}^{\infty}N_b^q(n)x^n\\ 
&=\sum_{b\in A}\left(\frac{x^b}{1+x^b}\prod_{a\in A}(1+x^a)\right)\\
&=\left(\prod_{a\in A}(1+x^a)\right)\left(\sum_{b\in A}\frac{x^b}{1+x^b}\right),
\end{align*}
as expected in (c).

Substituting (\ref{gf2}) and (\ref{gf7}) in (c), and equating the coefficients of $x^n$ on both sides gives (d). 
\end{proof}
As special cases of (b) of Theorem \ref{iden-1} we have the following three results. First, we have a convolution-sum expression for the number of parts in the (distinct) partitions of $n$. 
\begin{corollary}Let $N^p(n)$ (resp. $N^q(n)$) be the number of parts in the partitions (resp. distinct partitions) of $n$ and let $p(m)$ (resp. $q(m)$)  be the number of partitions (resp. distinct partitions) of $m$.
We have
\begin{description}
\item{(a)}
\begin{equation}
N^p(n)=\sum_{k=0}^{n-1}p(k)\tau(n-k),
\end{equation}
where $\tau(m)=\tau_{\mathbb N}(m)$,
\item{(b)} 
\begin{equation}
N^q(n)=\sum_{k=0}^{n-1}q(k)\tau^s(n-k),
\end{equation}
where $\tau^s(m)=\tau^s_{\mathbb N}(m)$.
\end{description}
\end{corollary}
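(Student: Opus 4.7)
The plan is to deduce this corollary directly from parts (b) and (d) of Theorem~\ref{iden-1} by specializing the set $A$ to be the set $\mathbb{N}$ of all positive integers. The main observation I would rely on is that when $A = \mathbb{N}$ there is no restriction on the parts, so the set-indexed quantities reduce to their classical counterparts.

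First I would verify the four reductions. Setting $A = \mathbb{N}$: a partition (resp.\ distinct partition) of $n$ with parts from $\mathbb{N}$ is simply a partition (resp.\ distinct partition) of $n$, so $N^p_{\mathbb{N}}(n) = N^p(n)$, $N^q_{\mathbb{N}}(n) = N^q(n)$, $p_{\mathbb{N}}(k) = p(k)$, and $q_{\mathbb{N}}(k) = q(k)$. Likewise, the divisor condition ``$a \mid n$ and $a \in \mathbb{N}$'' is just ``$a \mid n$,'' so $\tau_{\mathbb{N}}(m) = \tau(m)$ and $\tau_{\mathbb{N}}^s(m) = \tau^s(m)$ by the definitions given in the paper.

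Next I would invoke Theorem~\ref{iden-1}(b), which reads
\[
N^p_A(n) = \sum_{k=0}^{n-1} p_A(k)\,\tau_A(n-k),
\]
and substitute $A = \mathbb{N}$ to obtain statement (a) of the corollary. In the same way, applying Theorem~\ref{iden-1}(d) with $A = \mathbb{N}$ yields statement (b). Since Theorem~\ref{iden-1} has already been proved for an arbitrary set $A$ of positive integers, the specialization is immediate.

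There is no genuine obstacle here; the only subtlety worth spelling out is confirming that the definitions in the $A = \mathbb{N}$ case genuinely match the classical unrestricted quantities $N^p(n), N^q(n), p(k), q(k), \tau(m), \tau^s(m)$, which is nothing more than unwinding Definition~2 and Definition~3. I would present the proof as a single short paragraph performing these substitutions.
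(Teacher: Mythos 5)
Your proposal is correct and matches the paper's intent exactly: the paper presents this corollary without proof as the immediate special case $A=\mathbb{N}$ of Theorem~\ref{iden-1}(b) and (d), which is precisely the substitution you carry out. The reductions $N^p_{\mathbb{N}}=N^p$, $p_{\mathbb{N}}=p$, $\tau_{\mathbb{N}}=\tau$, etc., are the only points to check, and you verify them correctly.
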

As the second consequence of (b) of Theorem \ref{iden-1}, we have a recurrence identity for the number of prime divisors of $n$.
\begin{corollary}
Let $A=\{2,3,5,7,11,\cdots\}$ be the set of all prime numbers. Then we have
\begin{equation}
N^p_A(n)=\sum_{k=0}^{n-1}p_A(k)\Omega(n-k),
\end{equation} 
where $\Omega(m)$ denotes the number of prime divisors of $m$.
\end{corollary}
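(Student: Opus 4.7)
The plan is to recognize that this corollary is an immediate specialization of part (b) of Theorem~\ref{iden-1} once we identify $\tau_A(m)$ with $\Omega(m)$ for the chosen set $A$.

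First I would invoke part (b) of Theorem~\ref{iden-1} with $A$ taken to be the set of all primes, which yields
\[
N^p_A(n)=\sum_{k=0}^{n-1}p_A(k)\,\tau_A(n-k).
\]
So the only remaining task is to verify that $\tau_A(m)=\Omega(m)$ for every positive integer $m$ under this choice of $A$.

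Next I would unpack the definition of $\tau_A$: by definition,
\[
\tau_A(m)=\sum_{\substack{a\mid m\\ a\in A}}1,
\]
and since $A$ is precisely the set of primes, the divisors of $m$ that lie in $A$ are exactly the prime divisors of $m$. Hence $\tau_A(m)$ counts the prime divisors of $m$, which is $\Omega(m)$ in the sense stated in the corollary. Substituting this identification back into the convolution above yields the claimed identity.

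There is no real obstacle here; the content is entirely bookkeeping. The only point one might worry about is a notational ambiguity between the distinct-prime-divisor function $\omega$ and the with-multiplicity function (conventionally $\Omega$), but since $\tau_A$ in the paper is defined as an indicator sum over divisors in $A$, each prime of $m$ is counted exactly once, matching the author's usage of $\Omega(m)$ as ``the number of prime divisors of $m$.''
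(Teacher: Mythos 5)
Your proposal is correct and is exactly the argument the paper intends: the corollary is stated without proof as an immediate specialization of Theorem~\ref{iden-1}(b), with $\tau_A(m)=\Omega(m)$ when $A$ is the set of primes. Your side remark about $\omega$ versus $\Omega$ is well taken and correctly resolved in favor of the paper's definition of $\tau_A$ as a sum over distinct prime divisors.
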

As the third consequence of (b) of Theorem \ref{iden-1}, we have a recurrence identity for the $p$-adic valuation of $n$ for prime $p$. 
\begin{corollary}
Let $p$ be a prime number and let $A=\{1,p,p^2,p^3,\cdots \}$. We have
\begin{equation}
N^p_A(n)=\sum_{k=0}^{n-1}p_A(k)\left(\vartheta_p(n-k)+1\right).
\end{equation}
\end{corollary}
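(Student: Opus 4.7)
The plan is to apply part (b) of Theorem~\ref{iden-1} directly to the set $A=\{1,p,p^2,p^3,\ldots\}$. That theorem gives the convolution
\[
N^p_A(n)=\sum_{k=0}^{n-1}p_A(k)\,\tau_A(n-k),
\]
so the entire task reduces to evaluating $\tau_A(m)$ for every positive integer $m$ and verifying the closed form $\tau_A(m)=\vartheta_p(m)+1$.

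To carry this out, I would first unpack the definition: $\tau_A(m)$ counts the divisors of $m$ that lie in $A$, i.e.\ divisors of the shape $p^j$ with $j\geq 0$. Next, I would write $m=p^r s$ with $\gcd(s,p)=1$, so that $r=\vartheta_p(m)$ by definition. The condition $p^j\mid m$ is then equivalent to $0\leq j\leq r$, so the divisors of $m$ belonging to $A$ are precisely $1,p,p^2,\ldots,p^r$, which are $r+1$ in number. Hence $\tau_A(m)=r+1=\vartheta_p(m)+1$.

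Substituting this evaluation into the convolution yields
\[
N^p_A(n)=\sum_{k=0}^{n-1}p_A(k)\bigl(\vartheta_p(n-k)+1\bigr),
\]
which is the stated recurrence. As a sanity check, when $n-k=1$ one has $\vartheta_p(1)=0$ and indeed the unique divisor of $1$ in $A$ is $1$ itself, matching $\tau_A(1)=1$.

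I do not anticipate any real obstacle: the entire content of the corollary is the identification $\tau_A=\vartheta_p+1$ for this particular $A$, and everything else is a direct invocation of the already-proved Theorem~\ref{iden-1}(b). The only thing to be careful about is consistency with the convention $\vartheta_p(m)=0$ when $p\nmid m$ (since $1\in A$ always divides $m$, contributing the $+1$).
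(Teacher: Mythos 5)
Your proposal is correct and follows exactly the route the paper intends: the corollary is presented as an immediate specialization of Theorem~\ref{iden-1}(b), and the only content is the identification $\tau_A(m)=\vartheta_p(m)+1$ for $A=\{1,p,p^2,\ldots\}$, which you verify correctly. Nothing is missing.
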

As a consequence of (d) of Theorem \ref{iden-1}, we have an elegant recurrence relation for the Hamming weight function.
\begin{corollary}\label{ham-rec}
For integer $n\geq 2$, we have
\begin{equation}
h(n)=h(n-1)+1-\vartheta_2(n).
\end{equation}
\end{corollary}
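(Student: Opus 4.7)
The natural plan is to specialize part (d) of Theorem~\ref{iden-1} to the set $A=\{2^k : k\ge 0\}$, because the uniqueness of the binary representation means that distinct partitions of $n$ with parts from $A$ are in bijection with the ``1''-positions of the base-2 expansion of $n$. Consequently $q_A(n)=1$ for every $n\ge 0$, and the number of parts in the unique such partition is precisely the Hamming weight, so $N^q_A(n)=h(n)$.

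With this identification, the task reduces to computing $\tau_A^s(n)$ explicitly. Writing $n=2^v m$ with $m$ odd and $v=\vartheta_2(n)$, the divisors of $n$ lying in $A$ are exactly $1,2,4,\ldots,2^v$, so
\[
\tau_A^s(n)=\sum_{j=0}^{v}(-1)^{\,2^{v-j}m-1}.
\]
For $j<v$ the quotient $n/2^j=2^{v-j}m$ is even, so the summand is $-1$; for $j=v$ the quotient is the odd number $m$, so the summand is $+1$. Hence $\tau_A^s(n)=1-v=1-\vartheta_2(n)$.

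Now apply part (d) of Theorem~\ref{iden-1}: since $q_A(k)=1$, the convolution collapses to
\[
h(n)=N^q_A(n)=\sum_{k=0}^{n-1}\tau_A^s(n-k)=\sum_{j=1}^{n}\bigl(1-\vartheta_2(j)\bigr).
\]
Subtracting the analogous identity for $n-1$ from both sides yields $h(n)-h(n-1)=1-\vartheta_2(n)$, which is the claimed recurrence.

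I do not anticipate a genuine obstacle; the only step requiring any care is the sign computation for $\tau_A^s(n)$, where one must correctly track the parity of $n/a$ across the chain of $2$-power divisors. Everything else is a direct substitution into Theorem~\ref{iden-1}(d) together with telescoping.
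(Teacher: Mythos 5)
Your proposal is correct and follows essentially the same route as the paper: specialize Theorem~\ref{iden-1}(d) to $A=\{2^k:k\ge 0\}$, note $q_A(n)=1$ and $N^q_A(n)=h(n)$ by uniqueness of binary representation, identify $\tau_A^s(n)=1-\vartheta_2(n)$, and telescope. The only difference is that you spell out the sign computation for $\tau_A^s$, which the paper merely asserts.
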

\begin{proof}
Set $A=\{1,2,2^2,2^3,\cdots \}$. Then by basis representation theorem, we have $q_A(n)=1$ for every $n\geq 1$. By the definition: $q_A(0)=1$. Also we have $\tau_A^s(n)=1-\vartheta_2(n)$ and $N_A^q(n)=h(n)$. Substituting these values in (d) of Theorem \ref{iden-1} gives 
\begin{align*}
h(n)&=\sum_{k=1}^{n}(1-\vartheta_2(k))\\
&=h(n-1)+1-\vartheta_2(n)
\end{align*}
as expected. 
\end{proof}
\begin{note}
Corollary \ref{ham-rec} can be written as $h(n)=\vartheta_2\left(\frac{2^{n}}{n!}\right)$.
\end{note}
\subsection{ Inversion}
The following result is an inversion formula for (b) and (d) of Theorem \ref{iden-1}.
\begin{theorem}\label{iden-2}
Let $n$ be a positive integer and let $A$ be a set of positive integers. We have 
\begin{description}
\item{(a)}
\begin{equation}
\sum_{k=1}^{n}N^p_A(k)\left(q_A^e(n-k)-q_A^o(n-k)\right)=\tau_A(n),
\end{equation}
\item{(b)}
\begin{equation}
\sum_{k=1}^nN^q_A(k)\left(p_A^e(n-k)-p_A^o(n-k)\right)=\tau_A^s(n).
\end{equation}
\end{description}
\end{theorem}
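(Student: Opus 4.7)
The plan is to read both identities as Cauchy-product statements and verify them by multiplying the relevant generating functions, using the miraculous cancellation between the product $\prod_{a\in A}(1-x^a)$ and its reciprocal (resp.\ $\prod_{a\in A}(1+x^a)$ and its reciprocal).

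For part (a), I would start from the generating function for $N^p_A$ established in Theorem~\ref{iden-1}(a),
\[
\sum_{n=1}^{\infty}N^p_A(n)x^n=\Bigl(\prod_{a\in A}\tfrac{1}{1-x^a}\Bigr)\Bigl(\sum_{b\in A}\tfrac{x^b}{1-x^b}\Bigr),
\]
multiply both sides by $\prod_{a\in A}(1-x^a)$, and recognize via (\ref{gf4}) that this factor is exactly $\sum_{n\geq 0}(q_A^e(n)-q_A^o(n))x^n$, while via (\ref{gf5}) the surviving right-hand side $\sum_{b\in A}\frac{x^b}{1-x^b}$ is exactly $\sum_{n\geq 1}\tau_A(n)x^n$. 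Equating the coefficient of $x^n$ in the resulting Cauchy product yields (a); one only needs to observe that $q_A^e(0)-q_A^o(0)=1$ and that $N^p_A$ has no $n=0$ term, which is why the convolution sum starts at $k=1$ and ends at $k=n$.

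Part (b) is entirely parallel, with signs flipped. Starting from Theorem~\ref{iden-1}(c),
\[
\sum_{n=1}^{\infty}N^q_A(n)x^n=\Bigl(\prod_{a\in A}(1+x^a)\Bigr)\Bigl(\sum_{b\in A}\tfrac{x^b}{1+x^b}\Bigr),
\]
I would multiply by $\prod_{a\in A}(1+x^a)^{-1}$. By (\ref{gf3}) this factor is the generating function of $p_A^e-p_A^o$, and by (\ref{gf7}) the remaining sum $\sum_{b\in A}\frac{x^b}{1+x^b}$ is the generating function of $\tau_A^s$. Extracting the coefficient of $x^n$ delivers (b).

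There is no real obstacle here: both identities are formal consequences of the telescoping $\prod(1-x^a)\cdot\prod(1-x^a)^{-1}=1$ (resp.\ with $1+x^a$) together with the generating functions assembled in Section~1 and Theorem~\ref{iden-1}. The only bookkeeping point worth double-checking is the lower index of the convolution: since $N^p_A(0)=N^q_A(0)=0$ by definition, the natural Cauchy product starts at $k=1$, matching the statement of the theorem.
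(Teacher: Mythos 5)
Your proposal is correct and follows essentially the same route as the paper: multiply the generating function from Theorem~\ref{iden-1}(a) (resp.~(c)) by $\prod_{a\in A}(1-x^a)$ (resp.\ $\prod_{a\in A}(1+x^a)^{-1}$), identify the factors via (\ref{gf4}) and (\ref{gf5}) (resp.\ (\ref{gf3}) and (\ref{gf7})), and compare coefficients of $x^n$. Your remark about the convolution index starting at $k=1$ is a correct and slightly more careful piece of bookkeeping than the paper spells out.
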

\begin{proof}
From (a) of Theorem \ref{iden-1}, we have
\begin{equation}\label{man-iden-1}
\left(\sum_{n=1}^{\infty}N^p_A(n)x^n\right)\left(\prod_{a\in A}(1-x^a)\right)=\sum_{b\in A}\frac{x^b}{1-x^b}.
\end{equation}
Substituting (\ref{gf4}) and (\ref{gf5}) in (\ref{man-iden-1}), and equating the coefficients of $x^n$ on both sides gives (a). 

From (c) of Theorem \ref{iden-1}, we have
\begin{equation}\label{man-iden-2}
\left(\sum_{n=1}^{\infty}N^q_A(n)x^n\right)\left(\prod_{a\in A}\frac{1}{1+x^a}\right)=\sum_{b\in A}\frac{x^b}{1+x^b}.
\end{equation}
Substituting (\ref{gf3}) and (\ref{gf7}) in (\ref{man-iden-2}), and equating the coefficients of $x^n$ on both sides gives (b). 
\end{proof}

A recurrence identity of the form $g(n)=\sum_{k=0}^{n}\omega(k)f(n-k)$ with 
\[\omega(m)=\begin{cases}1 &\text{ if }m=0;\\ 
(-1)^k &\text{ if }m=\frac{3k^2\pm k}{2}
\end{cases}
\]
is called an Euler-type recurrence identity. 
As a consequence of Theorem \ref{iden-2} we have an Euler-type recurrence identity for the number of parts in the partitions of $n$. 
\begin{corollary}
We have
\begin{description}
\item{(a)}
\begin{equation}\label{NOP}
\sum_{k=1}^{n}N^p(k)\omega(n-k)=\tau(n),
\end{equation}
\item{(b)}
\begin{equation}
\sum_{k=1}^{n}(-1)^{n-k}N^q(k)o(n-k)=\tau^s(n),
\end{equation}
where $o(m)$ denotes the number of distinct partitions of $m$ with odd parts, $\tau^s(m)=\tau_{\mathbb N}^s(m)$ and $\tau(m)=\tau_{\mathbb N}(m)$. 
\end{description}
\end{corollary}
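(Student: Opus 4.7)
The plan is to invoke Theorem \ref{iden-2} with $A=\mathbb{N}$; both parts then reduce to a purely formal identification of the signed partition-counts $q_A^e-q_A^o$ and $p_A^e-p_A^o$ with the classical functions $\omega$ and $(-1)^m o(m)$ via the generating-function formulas (\ref{gf4}) and (\ref{gf3}).

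For part (a), I specialize (a) of Theorem \ref{iden-2} to $A=\mathbb{N}$. The factor $q_A^e(n-k)-q_A^o(n-k)$ has generating function $\prod_{a\geq 1}(1-x^a)$ by (\ref{gf4}), and Euler's pentagonal number theorem gives
$$\prod_{a=1}^{\infty}(1-x^a)=\sum_{k=-\infty}^{\infty}(-1)^k x^{(3k^2-k)/2}.$$
Reading off coefficients shows $q_{\mathbb{N}}^e(m)-q_{\mathbb{N}}^o(m)=\omega(m)$ for every $m\geq 0$, since $\omega$ vanishes off the pentagonal indices and takes value $(-1)^k$ at $m=(3k^2\pm k)/2$. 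Using $\tau_{\mathbb{N}}=\tau$ and $N_{\mathbb{N}}^p=N^p$, the specialization reproduces (\ref{NOP}) verbatim.

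For part (b), I similarly specialize (b) of Theorem \ref{iden-2} to $A=\mathbb{N}$. By (\ref{gf3}), the signed count $p_{\mathbb{N}}^e-p_{\mathbb{N}}^o$ has generating function $\prod_{a\geq 1}(1+x^a)^{-1}$, and Euler's "distinct-equals-odd" identity rewrites this as $\prod_{k\geq 0}(1-x^{2k+1})$. Since $\sum_m o(m)x^m=\prod_{k\geq 0}(1+x^{2k+1})$, substituting $x\mapsto -x$ yields
$$\prod_{k\geq 0}(1-x^{2k+1})=\sum_{m=0}^{\infty}(-1)^m o(m)x^m,$$
whence $p_{\mathbb{N}}^e(m)-p_{\mathbb{N}}^o(m)=(-1)^m o(m)$. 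Absorbing the sign into the convolution as $(-1)^{n-k}$ produces the identity in (b).

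The entire argument is one of dictionary translation; no deep combinatorial input beyond Euler's pentagonal and odd–distinct identities is required. The one place where a slip is possible is the $(-1)^m$ factor in part (b), which arises only after applying $x\mapsto -x$ to the odd-distinct generating function, so sign-tracking there is the single substantive step to double-check.
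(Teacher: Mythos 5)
Your proposal is correct and follows essentially the same route as the paper: specialize Theorem \ref{iden-2} to $A=\mathbb N$, identify $q_{\mathbb N}^e-q_{\mathbb N}^o$ with $\omega$ via Euler's pentagonal number theorem for (a), and identify $p_{\mathbb N}^e-p_{\mathbb N}^o$ with $(-1)^m o(m)$ via Euler's odd--distinct identity for (b). Your explicit $x\mapsto -x$ sign-tracking is a minor elaboration of the paper's equation (\ref{ept}), but the argument is the same.
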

\begin{proof}
Set $A=\mathbb N$. Then by Euler's pentagonal number theorem \cite{euler-pnt} we have \[
q_A^e(n-k)-q_A^o(n-k)=\omega(n-k).\] Substituting this value in (a) of Theorem \ref{iden-2}, and equating the coefficients of $x^n$ on both sides gives (a).

From (\ref{gf3}) we have
\[\prod_{n=1}^{\infty}\frac{1}{1+x^n}=\sum_{m=0}^{\infty}(p_{\mathbb N}^e(m)-p_{\mathbb N}^o(m))x^m.
\]
In view of Euler's partition theorem \cite{euler}, we have
\begin{equation}\label{ept}
\prod_{n=1}^{\infty}\frac{1}{1+x^n}=\prod_{n=1}^{\infty}(1-x^{2n-1})=\sum_{m=0}^{\infty}(-1)^mo(m)x^m.
\end{equation}
Substituting these values in (b) of Theorem \ref{iden-2}, and equating the coefficients of $x^n$ on both sides gives (b). 
\end{proof}
As another application of Theorem \ref{iden-2} we express the number of prime divisors of $n$ as the convolution sum of $N^p_A(k)$ and $q_A^e(k)-q_A^o(k)$ with $A$ being the set of prime numbers.
\begin{corollary}
Let $A=\{2,3,5,7,11,\cdots\}$ be the set of all prime numbers. We have
\begin{equation}
\sum_{k=1}^nN^p_A(k)(q_A^e(n-k)-q_A^o(n-k))=\Omega(n).
\end{equation}
\end{corollary}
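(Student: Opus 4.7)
The plan is to obtain this identity as a direct specialization of part~(a) of Theorem~\ref{iden-2}, which asserts
\[
\sum_{k=1}^{n}N^p_A(k)\bigl(q_A^e(n-k)-q_A^o(n-k)\bigr)=\tau_A(n)
\]
for an arbitrary set $A$ of positive integers. The only non-formal step is to match the right-hand side with $\Omega(n)$ under the specific choice $A=\{2,3,5,7,11,\ldots\}$.

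So first I would take $A$ to be the set of all primes in Theorem~\ref{iden-2}(a); this is permitted since the theorem makes no finiteness assumption on $A$. Second, I would unpack the definition of $\tau_A(n)$: by the paper's definition
\[
\tau_A(n)=\sum_{\substack{a\mid n\\ a\in A}}1,
\]
so when $A$ is the set of primes, $\tau_A(n)$ counts exactly those prime numbers that divide $n$, which is precisely $\Omega(n)$ as introduced in the preceding corollary. Substituting this identification on the right-hand side of Theorem~\ref{iden-2}(a) produces the claimed formula.

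Since every ingredient has already been established in the paper, there is no genuine obstacle to overcome; the content of the corollary is a labelling of the general inversion identity at the prime-set specialization. The only cautionary remark worth making in the write-up is that $N^p_A(k)$ and $q_A^e(n-k)-q_A^o(n-k)$ on the left are now themselves non-trivial arithmetic quantities attached to partitions with prime parts, so the identity is of interest even though its proof is a one-line substitution.
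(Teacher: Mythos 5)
Your proposal is correct and matches the paper's intent exactly: the corollary is stated as an immediate specialization of Theorem~\ref{iden-2}(a) to the set of primes, with $\tau_A(n)$ identified as the number of prime divisors of $n$ (the paper gives no separate proof). Your cautionary note about the paper's use of $\Omega$ for the count of distinct prime divisors is apt but does not change the argument.
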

A result similar to the above one is achieved for the $p$-adic valuation of $n$.   
\begin{corollary}\label{cor-3-iden-2}
Let $p$ be a prime number and let $A=\{1,p,p^2,p^3,\cdots \}$. We have
\begin{equation}
\sum_{k=1}^nN^p_A(k)(q_A^e(n-k)-q_A^o(n-k))=\vartheta_p(n)+1.
\end{equation}
\end{corollary}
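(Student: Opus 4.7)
The plan is to recognize this corollary as an immediate specialization of part~(a) of Theorem~\ref{iden-2}, which already supplies the identity
\[
\sum_{k=1}^{n} N_A^p(k)\bigl(q_A^e(n-k)-q_A^o(n-k)\bigr)=\tau_A(n)
\]
for an arbitrary set $A$ of positive integers. So the entire task reduces to computing $\tau_A(n)$ in the special case $A=\{1,p,p^2,p^3,\ldots\}$ and showing it equals $\vartheta_p(n)+1$.

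To evaluate $\tau_A(n)$, I would unfold the definition $\tau_A(n)=\sum_{a\mid n,\,a\in A}1$ and observe that the divisors of $n$ which belong to $A$ are precisely the powers of $p$ dividing $n$, namely $p^0,p^1,\ldots,p^{\vartheta_p(n)}$. By the definition of the $p$-adic valuation, $p^r\mid n$ if and only if $r\leq \vartheta_p(n)$, so this list has exactly $\vartheta_p(n)+1$ entries. Hence $\tau_A(n)=\vartheta_p(n)+1$.

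Substituting this evaluation into Theorem~\ref{iden-2}(a) for the chosen set $A$ yields the claimed identity. There is no real obstacle here; the only thing to be careful about is the standard convention that $\vartheta_p(n)\geq 0$ for $n\geq 1$ (so $1=p^0$ is always counted), which is what guarantees the ``$+1$'' on the right-hand side.
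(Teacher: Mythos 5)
Your proposal is correct and matches the paper's (implicit) argument exactly: the corollary is stated as a direct consequence of Theorem~\ref{iden-2}(a), with the only work being the observation that for $A=\{1,p,p^2,\ldots\}$ the divisors of $n$ lying in $A$ are $p^0,p^1,\ldots,p^{\vartheta_p(n)}$, so $\tau_A(n)=\vartheta_p(n)+1$.
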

As a special case of Corollary \ref{cor-3-iden-2} we have a relation between Hamming weight function, the number of parts in the binary partitions of $n$ and the $2$-adic valuation of $n$.
\begin{corollary}
Let $N^p_{bin}(m)$ be the number of parts in the binary partitions of $m$. Define $s(0)=1$ and $s(m)=(-1)^{h(m)}$ for $m\geq 1$. Then we have 
\begin{equation}
\sum_{k=1}^{n}N^p_{bin}(k)s(n-k)=\vartheta_2(n)+1.
\end{equation}
\end{corollary}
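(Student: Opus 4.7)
The plan is to obtain this identity as a direct specialization of Corollary \ref{cor-3-iden-2} with $p=2$, so that $A=\{1,2,4,8,\ldots\}$ is the set of nonnegative powers of two. Under this choice the left-hand side quantity $N^p_A(k)$ is literally the number of parts in binary partitions of $k$, namely $N^p_{bin}(k)$, and the right-hand side $\vartheta_p(n)+1$ becomes $\vartheta_2(n)+1$, matching the form of the claim. The only thing left to reconcile is the middle factor $q_A^e(n-k)-q_A^o(n-k)$ with the function $s(n-k)$ defined in the statement.

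For that reconciliation I would appeal to the basis representation theorem: for every $m\ge 1$, the binary expansion supplies the unique distinct partition of $m$ with parts from $A$, and the number of parts in this unique partition is by definition $h(m)$. Hence exactly one distinct partition contributes, and it contributes to $q_A^e(m)$ or to $q_A^o(m)$ according as $h(m)$ is even or odd. This yields
\[
q_A^e(m)-q_A^o(m)=(-1)^{h(m)}=s(m)\qquad(m\ge 1).
\]
For $m=0$, the empty partition has zero parts (even), so $q_A^e(0)-q_A^o(0)=1=s(0)$, consistent with the definition of $s$.

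Substituting these identifications into the equation of Corollary \ref{cor-3-iden-2} then yields the stated convolution identity term by term. I do not foresee any real obstacle: the argument is essentially a dictionary translation, with the one nontrivial ingredient being the uniqueness of binary representation that forces $q_A(m)=1$ and pins down the sign pattern of $q_A^e-q_A^o$ via the Hamming weight.
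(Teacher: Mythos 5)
Your proposal is correct and is exactly the route the paper intends: the corollary is stated as a direct special case of Corollary~\ref{cor-3-iden-2} with $p=2$, and your identification $q_A^e(m)-q_A^o(m)=(-1)^{h(m)}=s(m)$ via the uniqueness of binary representation (with $q_A^e(0)-q_A^o(0)=1=s(0)$) is precisely the substitution the paper leaves implicit. No gaps.
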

\subsection{ Product with $\prod_{n=1}^{\infty}(1-x^n)$ and $\prod_{n=1}^{\infty}(1+x^n)^{-1}$}
Multiplying the terms $\prod_{n=1}^{\infty}(1-x^n)$ and $\prod_{n=1}^{\infty}(1+x^n)^{-1}$, respectively, with the generating functions (a) and (c) of Theorem \ref{iden-1} gives recurrence identities (separately)  for $N^p_A(m)$ and $N^q_A(m)$ when $A\neq\mathbb N$, of which one is Euler-type.
\begin{theorem}\label{iden-3}
We have
\begin{description}
\item{(a)}
\begin{equation}
\sum_{k=1}^{n}N^p_A(k)\omega(n-k)=\sum_{k=1}^{n}\tau_A(k)\left(q_{\mathbb N-A}^e(n-k)-q_{\mathbb N-A}^o(n-k)\right),
\end{equation}
\item{(b)}
\begin{equation}
\sum_{k=1}^n(-1)^{n-k}N^q_A(k)o(n-k)=\sum_{k=1}^{n}\tau_A^s(k)\left(p_{\mathbb N-A}^e(n-k)-p_{\mathbb N-A}^o(n-k)\right).
\end{equation}
\end{description} 
\end{theorem}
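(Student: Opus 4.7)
The plan is to follow the strategy announced in the subsection title: take the generating functions from parts (a) and (c) of Theorem \ref{iden-1} and multiply them by the two distinguished infinite products. The crucial observation is that these multiplications create cancellations over the set $A$, leaving behind a factor indexed by the complementary set $\mathbb N-A$, which is exactly where $q_{\mathbb N-A}^e-q_{\mathbb N-A}^o$ and $p_{\mathbb N-A}^e-p_{\mathbb N-A}^o$ appear.

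For part (a), I would start from (a) of Theorem \ref{iden-1} and multiply both sides by $\prod_{n=1}^{\infty}(1-x^n)$. On the right, the factor $\prod_{a\in A}(1-x^a)^{-1}$ cancels the portion of the product indexed by $A$, leaving $\prod_{n\in \mathbb N-A}(1-x^n)$; by (\ref{gf4}) this equals $\sum_{m\geq 0}(q_{\mathbb N-A}^e(m)-q_{\mathbb N-A}^o(m))x^m$, and the remaining factor $\sum_{b\in A}x^b/(1-x^b)$ equals $\sum_{m\geq 1}\tau_A(m)x^m$ by (\ref{gf5}). On the left, Euler's pentagonal number theorem gives $\prod_{n=1}^{\infty}(1-x^n)=\sum_{m\geq 0}\omega(m)x^m$. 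Equating the coefficients of $x^n$ then yields (a).

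For part (b), the argument is structurally identical with the signs flipped: multiply (c) of Theorem \ref{iden-1} by $\prod_{n=1}^{\infty}(1+x^n)^{-1}$. On the right, the factor $\prod_{a\in A}(1+x^a)$ cancels the $A$-indexed part, leaving $\prod_{n\in\mathbb N-A}(1+x^n)^{-1}$, which by (\ref{gf3}) unfolds as $\sum_{m\geq 0}(p_{\mathbb N-A}^e(m)-p_{\mathbb N-A}^o(m))x^m$; the Lambert-type sum $\sum_{b\in A}x^b/(1+x^b)$ expands as $\sum_{m\geq 1}\tau_A^s(m)x^m$ by (\ref{gf7}). On the left, (\ref{ept}) gives $\prod_{n=1}^{\infty}(1+x^n)^{-1}=\sum_{m\geq 0}(-1)^m o(m)x^m$, so the coefficient of $x^n$ in the product $(\sum N^q_A(k)x^k)(\sum(-1)^m o(m)x^m)$ is exactly $\sum_{k=1}^{n}(-1)^{n-k}N^q_A(k)o(n-k)$. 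Matching coefficients delivers (b).

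There is no real obstacle here; the entire proof is a formal manipulation in the power series ring. The only thing to be careful about is keeping the index sets straight when factoring $\prod_{n\in\mathbb N}=\prod_{a\in A}\cdot\prod_{n\in\mathbb N-A}$, and invoking Euler's pentagonal theorem and Euler's odd-parts identity at the right moments so that $\omega$ and $o$ enter on the correct sides. Since all series involved are absolutely convergent for $|x|<1$, rearrangement and the Cauchy product are valid throughout.
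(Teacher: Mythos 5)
Your proposal is correct and follows essentially the same route as the paper: multiply the generating functions from (a) and (c) of Theorem \ref{iden-1} by $\prod_{n=1}^{\infty}(1-x^n)$ and $\prod_{n=1}^{\infty}(1+x^n)^{-1}$ respectively, cancel the $A$-indexed factors to expose the complementary products, and then invoke Euler's pentagonal number theorem, Euler's odd-parts identity, and the expansions (\ref{gf3}), (\ref{gf4}), (\ref{gf5}), (\ref{gf7}) before equating coefficients. No gaps.
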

\begin{proof}
From (a) of Theorem \ref{iden-1}, we have
\[\sum_{n=1}^{\infty}N^p_A(n)x^n=\left(\prod_{a\in A}\frac{1}{1-x^a}\right)\left(\sum_{b\in A}\frac{x^b}{1-x^b}\right).
\]
Multiplying both sides with $\prod_{m=1}^{\infty}(1-x^m)$ gives 
\[\left(\sum_{n=1}^{\infty}N^p_A(n)x^n\right)\left(\prod_{m=1}^{\infty}(1-x^m)\right)=\left(\prod_{a\in\mathbb N\setminus A}(1-x^a)\right)\left(\sum_{b\in A}\frac{x^b}{1-x^b}\right).
\]
Now in view of Euler's pentagonal number theorem \cite{euler-pnt}, (\ref{gf4}) and (\ref{gf5}), we get (a).

From (c) of Theorem \ref{iden-1}, we have
\[\sum_{n=1}^{\infty}N^q_A(n)x^n=\left(\prod_{a\in A}(1+x^a)\right)\left(\sum_{b\in A}\frac{x^b}{1+x^b}\right).
\]
Multiplying both sides by $\prod_{m=1}^{\infty}\frac{1}{1+x^m}$ gives 
\begin{equation}\label{prod-lemma}
\left(\sum_{n=1}^{\infty}N^q_A(n)x^n\right)\left(\prod_{m=1}^{\infty}\frac{1}{1+x^m}\right)=\left(\prod_{a\in \mathbb N\setminus A}\frac{1}{1+x^a}\right)\left(\sum_{b\in A}\frac{x^b}{1+x^b}\right).
\end{equation}
Now substituting (\ref{ept}), (\ref{gf3}) and (\ref{gf7}) in the equation above, and equating the coefficients of like powers of $x$ on both sides gives (b).  
\end{proof}
\subsection{ Logarithmic Differentiation}
Logarithmic differentiation of the generating function of $p(n)$ gives the following relation:
\begin{equation}\label{sigma}
np(n)=\sum_{k=0}^{n-1}\sigma(k)p(n-k).
\end{equation}
We wield the same technique for $N^p_A(n)$ and $N^q_A(n)$ to get results of similar kind.
\begin{theorem}\label{iden-4}
We have
\begin{description}
\item{(a)}
\begin{equation}
nN^p_A(n)=\sum_{k=1}^{n-1}N^p_A(k)\sigma_A(n-k)+\sum_{t=1}^{n}t\tau_A(t)p_A(n-t),
\end{equation}
\item{(b)}
\begin{equation}
nN^q_A(n)=\sum_{k=1}^{n-1}N^q_A(k)\sigma_A^s(n-k)+\sum_{t=1}^{n}t\tau_A^s(t)q_A(n-t).
\end{equation}
\end{description}
\end{theorem}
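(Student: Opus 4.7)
The plan is to mirror the classical logarithmic-differentiation derivation of $np(n)=\sum\sigma(k)p(n-k)$, but applied to the two-factor products appearing in Theorem \ref{iden-1}(a) and (c). Concretely, I write
\[
F(x)=\sum_{n\ge1}N^p_A(n)x^n=P(x)\,T(x),\qquad P(x)=\prod_{a\in A}\frac{1}{1-x^a},\quad T(x)=\sum_{b\in A}\frac{x^b}{1-x^b},
\]
so that $P(x)=\sum_n p_A(n)x^n$ by (\ref{gf1}) and $T(x)=\sum_n \tau_A(n)x^n$ by (\ref{gf5}). Applying the operator $x\frac{d}{dx}$ turns $F$ into $\sum nN^p_A(n)x^n$, and the product rule gives $xF'=(xP')T+P(xT')$.

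The key calculation is the logarithmic derivative of $P$: differentiating $\log P(x)=-\sum_{a\in A}\log(1-x^a)$ yields
\[
\frac{xP'(x)}{P(x)}=\sum_{a\in A}\frac{ax^a}{1-x^a}=\sum_{n\ge1}\sigma_A(n)x^n
\]
by (\ref{gf6}). Hence $xP'=P\cdot\sum_n\sigma_A(n)x^n$, so $(xP')T=F\cdot\sum_n\sigma_A(n)x^n$. On the other hand $xT'=\sum_{n\ge1}n\tau_A(n)x^n$ directly from the series for $T$. Reading off the coefficient of $x^n$ (and noting $\sigma_A(0)=0$, so the first Cauchy product only runs $k=1,\dots,n-1$) gives exactly part (a).

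Part (b) is entirely parallel. Write $G(x)=\sum_{n\ge1}N^q_A(n)x^n=Q(x)T^s(x)$ with $Q(x)=\prod_{a\in A}(1+x^a)=\sum_n q_A(n)x^n$ by (\ref{gf2}) and $T^s(x)=\sum_b x^b/(1+x^b)=\sum_n\tau_A^s(n)x^n$ by (\ref{gf7}). Differentiating $\log Q(x)=\sum_{a\in A}\log(1+x^a)$ and invoking (\ref{gf8}) produces
\[
\frac{xQ'(x)}{Q(x)}=\sum_{a\in A}\frac{ax^a}{1+x^a}=\sum_{n\ge1}\sigma_A^s(n)x^n,
\]
so $xG'=G\cdot\sum_n\sigma_A^s(n)x^n+Q\cdot\sum_{n\ge1}n\tau_A^s(n)x^n$, and extracting $[x^n]$ yields (b).

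There is no real obstacle beyond bookkeeping: both identities fall out of one application of $(uv)'=u'v+uv'$ once one recognizes $\sigma_A$ and $\sigma_A^s$ as the logarithmic derivatives of the relevant products, which the paper has already packaged in (\ref{gf6}) and (\ref{gf8}). The only mild subtlety is making sure the summation ranges on the right-hand sides are correct — in particular, using $\sigma_A(0)=\sigma_A^s(0)=0$ to truncate the convolution with $N_A^p,N_A^q$ at $k=n-1$, and using $p_A(0)=q_A(0)=1$ to include the boundary term $t=n$ in the second sum.
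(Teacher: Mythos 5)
Your proposal is correct and follows essentially the same route as the paper: both reduce to the identity $x\frac{d}{dx}(PT)=PT\cdot\frac{xP'}{P}+P\cdot xT'$, identify $\frac{xP'}{P}$ with the generating function of $\sigma_A$ (resp.\ $\sigma_A^s$) via (\ref{gf6}) and (\ref{gf8}), and extract coefficients. The paper phrases it as logarithmic differentiation of the full product $N^p_A(x)$ followed by clearing the denominator $\tau_A(x)$, but this is algebraically identical to your product-rule decomposition, including the bookkeeping of the summation ranges.
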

\begin{proof}
Define the following:
\[N^p_A(x)=\left(\prod_{a\in A}\frac{1}{1-x^a}\right)\left(\sum_{b\in A}\frac{x^b}{1-x^b}\right);
\]
\[N^q_A(x)=\left(\prod_{a\in A}(1+x^a)\right)\left(\sum_{b\in A}\frac{x^b}{1+x^b}\right);
\]
\[p_A(x)=\prod_{a\in A}\frac{1}{(1-x^a)};
\]
\[q_A(x)=\prod_{a\in A}(1+x^a);
\]
\[\tau_A(x)=\sum_{b\in A}\frac{x^b}{1-x^b};
\]
\[\tau_A^s(x)=\sum_{b\in A}\frac{x^b}{1+x^b}.
\]
Now taking the logarithm of $N^p_A(x)$ and then differentiating gives 
\[
\frac{N^{p^\prime}_A(x)}{N^p_A(x)}=\sum_{a\in A}\frac{ax^{a-1}}{1-x^a}+\frac{\tau_A^\prime(x)}{\tau_A(x)}.
\]
Multiplying both sides by $x$ and rearranging gives 
\begin{align*}
xN^{p^\prime}_A(x)&=N^p_A(x)\left(\sum_{a\in A}\frac{ax^{a}}{1-x^a}+\frac{x\tau_A^\prime(x)}{\tau_A(x)}\right)\\
&=N^p_A(x)\left(\sum_{a\in A}\frac{ax^{a}}{1-x^a}+\frac{x\tau_A^\prime(x)}{N^p_A(x)}p_A(x)\right)\\
&=N^p_A(x)\left(\sum_{a\in A}\frac{ax^{a}}{1-x^a}\right)+x\tau_A ^\prime(x)p_A(x).
\end{align*}
Now equating the coefficients of $x^n$ on extreme terms of the above chain of equalities gives (a).

Taking the logarithm of $N^q_A(x)$ and differentiating gives
\[\frac{N^{q^\prime}_A(x)}{N^q_A(x)}=\sum_{a\in A}\frac{ax^{a-1}}{1+x^a}+\frac{\tau_A^{s^\prime}(x)}{\tau_A^s(x)}.
\]
Multiplying both sides by $x$ gives
\begin{align*}
xN^{q^\prime}_A(x)&=N^q_A(x)\left(\sum_{a\in A}\frac{ax^{a}}{1+x^a}+\frac{x\tau_A^{s^\prime}(x)}{\tau_A^s(x)}\right)\\
&=N^q_A(x)\left(\sum_{a\in A}\frac{ax^{a}}{1+x^a}+\frac{x\tau_A^{s^\prime}(x)}{N^q_A(x)}q_A(x)\right)\\ 
&=N^q_A(x)\left(\sum_{a\in A}\frac{ax^{a}}{1+x^a}\right)+x\tau_A^{s^\prime}(x)q_A(x).
\end{align*}
Now equating the coefficients of $x^n$ on extreme terms of the above chain of equalities gives (b).
\end{proof}
\begin{corollary}
We have
\begin{description}
\item{(a)}
\begin{equation}
nN^p(n)=\sum_{k=1}^{n-1}N^p(k)\sigma(n-k)+\sum_{t=1}^{n}t\tau(t)p(n-t),
\end{equation}
\item{(b)}
\begin{equation}
nN^q(n)=\sum_{k=1}^{n-1}N^q(k)\sigma^s(n-k)+\sum_{t=1}^{n}t\tau^s(t)q(n-t),
\end{equation}
\item{(c)}
\begin{equation}
nN_{bin}^p(n)=\sum_{k=1}^{n-1}N_{bin}^p(k)\left(2^{\vartheta_2(n-k)+1}-1\right)+\sum_{t=1}^nt(\vartheta_2(t)+1)b(n-t).
\end{equation}
\end{description}
\end{corollary}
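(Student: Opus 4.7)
The plan is to derive all three parts of the corollary as direct specializations of Theorem~\ref{iden-4}, choosing the set $A$ appropriately in each case. No new machinery is needed; the content is entirely in picking $A$ and simplifying the arithmetic functions $\tau_A$, $\sigma_A$, $\tau_A^s$, $\sigma_A^s$ that appear on the right-hand sides.

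For parts (a) and (b), I would take $A=\mathbb{N}$ in parts (a) and (b) of Theorem~\ref{iden-4}, respectively. Under this choice, the restricted divisor sums $\tau_A$, $\sigma_A$, $\tau_A^s$, $\sigma_A^s$ coincide with the classical functions $\tau$, $\sigma$, $\tau^s$, $\sigma^s$; the partition counts $p_A$, $q_A$ reduce to $p$, $q$; and the part-count functions $N_A^p$, $N_A^q$ become $N^p$, $N^q$. With these identifications, the two identities of Theorem~\ref{iden-4} immediately become (a) and (b) with no further computation required.

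For part (c), I would take $A = \{1,2,2^2,2^3,\ldots\}$ in part (a) of Theorem~\ref{iden-4}. By the definitions of the binary partition counting functions, $p_A(n) = b(n)$ and $N_A^p(n) = N_{bin}^p(n)$. The only step requiring a brief calculation is the closed-form evaluation of $\tau_A(n)$ and $\sigma_A(n)$. Since the divisors of $n$ that are powers of $2$ are precisely $2^0, 2^1, \ldots, 2^{\vartheta_2(n)}$, one obtains $\tau_A(n) = \vartheta_2(n) + 1$ and, by summing a finite geometric series, $\sigma_A(n) = 2^{\vartheta_2(n)+1} - 1$. Substituting these expressions, together with $p_A = b$ and $N_A^p = N_{bin}^p$, into part (a) of Theorem~\ref{iden-4} yields the asserted identity.

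There is no genuine obstacle here: each part reduces to a mechanical substitution into Theorem~\ref{iden-4}. The only item worth stating carefully is the pair of evaluations $\tau_A(n) = \vartheta_2(n)+1$ and $\sigma_A(n) = 2^{\vartheta_2(n)+1} - 1$ on the set of nonnegative powers of $2$, both of which follow at once from the definition of $\vartheta_2$ and the geometric-series formula.
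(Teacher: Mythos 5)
Your proof is correct and matches the paper's intent exactly: the corollary is stated without proof as an immediate specialization of Theorem~\ref{iden-4}, with $A=\mathbb{N}$ for (a) and (b) and $A=\{1,2,2^2,\ldots\}$ for (c), and your evaluations $\tau_A(n)=\vartheta_2(n)+1$ and $\sigma_A(n)=2^{\vartheta_2(n)+1}-1$ are the right (and only) computations needed.
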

\subsection{ Recurrence Identities over $A$}
In this section, we derive recurrence identities for $N_A^p(n)$ and $N_A^q(n)$ over the set $A$. 
\begin{theorem}\label{roverA}
Let $n$ be a positive integer and let $A$ be a set of positive integers. For $s\in A$, we have
\begin{equation}
N_A^p(n)-N_A^p(n-s)=p_A(n-s)+N_{A\setminus\{s\}}^p(n)
\end{equation}
with $p_A(0)=1$.
\end{theorem}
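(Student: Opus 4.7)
The plan is to give a short bijective proof by splitting the partitions of $n$ according to whether or not the chosen element $s\in A$ appears as a part. Let $P_A^n$ denote the set of partitions of $n$ with parts from $A$, and for $\pi\in P_A^n$ let $|\pi|$ denote its number of parts, so that $N_A^p(n)=\sum_{\pi\in P_A^n}|\pi|$. First I would write
\[
P_A^n \;=\; P_A^n(s) \;\sqcup\; P_{A\setminus\{s\}}^n,
\]
where $P_A^n(s)$ consists of those partitions in which $s$ occurs at least once. The contribution of the second summand to $N_A^p(n)$ is, by definition, exactly $N_{A\setminus\{s\}}^p(n)$.

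Next, for partitions in $P_A^n(s)$, I would apply the map that removes a single copy of the part $s$. This is a bijection between $P_A^n(s)$ and $P_A^{n-s}$, and it decreases the number of parts by one. Hence the contribution of this summand to $N_A^p(n)$ is
\[
\sum_{\pi'\in P_A^{n-s}}\bigl(|\pi'|+1\bigr) \;=\; N_A^p(n-s) + p_A(n-s).
\]
Adding the two contributions and rearranging gives the claimed identity, with the convention $p_A(0)=1$ handling the boundary case $n=s$.

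As a sanity check (and alternative derivation) I would verify the identity at the level of generating functions: multiplying part (a) of Theorem~\ref{iden-1} by $(1-x^s)$ and using
\[
(1-x^s)\prod_{a\in A}\frac{1}{1-x^a} \;=\; \prod_{a\in A\setminus\{s\}}\frac{1}{1-x^a}, \qquad \sum_{b\in A}\frac{x^b}{1-x^b} \;=\; \frac{x^s}{1-x^s}+\sum_{b\in A\setminus\{s\}}\frac{x^b}{1-x^b},
\]
one obtains
\[
(1-x^s)\sum_{n=1}^{\infty} N_A^p(n)x^n \;=\; x^s\sum_{n=0}^{\infty} p_A(n)x^n \;+\; \sum_{n=1}^{\infty} N_{A\setminus\{s\}}^p(n)x^n,
\]
and equating coefficients of $x^n$ yields the same recurrence. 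I do not expect any real obstacle here; the only care needed is to treat $n\le s$ correctly (so that the $p_A(n-s)$ term is interpreted as $0$ when $n<s$ and as $1$ when $n=s$).
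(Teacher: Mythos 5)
Your proposal is correct, and your primary argument is genuinely different from the paper's. The paper proves this theorem purely at the level of generating functions: it multiplies part (a) of Theorem~\ref{iden-1} by $(1-x^s)$, absorbs that factor into the product to delete $s$ from the index set, splits off the $b=s$ term of the Lambert-type sum, and equates coefficients --- which is precisely your ``sanity check.'' Your main argument, by contrast, is bijective: decompose $P_A^n$ into the partitions avoiding $s$ (contributing $N_{A\setminus\{s\}}^p(n)$) and those containing $s$, and map the latter onto $P_A^{n-s}$ by deleting one copy of $s$, so that the part count drops by exactly one and the total contribution is $N_A^p(n-s)+p_A(n-s)$. The bijection is correct (adding back a copy of $s$ is the inverse), and it makes the provenance of each term transparent: the $p_A(n-s)$ term is exactly the ``$+1$'' for the removed part, and the boundary convention $p_A(0)=1$ is forced by the single partition $(s)$ of $n=s$. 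What the paper's route buys is brevity given that Theorem~\ref{iden-1}(a) is already in hand, and uniform handling of small $n$ without case analysis; what your route buys is an elementary, self-contained proof that does not depend on the generating function at all. Your treatment of the edge cases $n<s$ and $n=s$ is adequate.
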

\begin{proof}
By Theorem \ref{iden-1},
\begin{equation}
\sum_{n=1}^{\infty}N_A^p(n)x^n=\left(\prod_{a\in A}\frac{1}{1-x^a}\right)\left(\sum_{b\in A}\frac{x^b}{1-x^b}\right).
\end{equation}
This gives 
\begin{align*}
\left(\sum_{n=1}^{\infty}N_A^p(n)x^n\right)(1-x^s)&=\left(\prod_{a\in A\setminus\{s\}}\frac{1}{1-x^a}\right)\left(\sum_{b\in A}\frac{x^b}{1-x^b}\right)\\
&=\left(\prod_{a\in A\setminus\{s\}}\frac{1}{1-x^a}\right)\left(\frac{x^s}{1-x^s}+\sum_{b\in A\setminus\{s\}}\frac{x^b}{1-x^b}\right)\\
&=x^s\prod_{a\in A}\frac{1}{1-x^a}+\left(\prod_{a\in A\setminus\{s\}}\frac{1}{1-x^a}\right)\left(\sum_{b\in A\setminus\{s\}}\frac{x^b}{1-x^b}\right).
\end{align*}
Now writing the products in the right side as infinite sums in accordance with (\ref{gf1}) and (a) of Theorem \ref{iden-1}, and equating the coefficients of $x^n$ on both sides gives the expected end. 
\end{proof}
\begin{theorem}
Let $n$ be a positive integer and let $A$ be a set of positive integers. Let $s\in A$. We have
\begin{equation}
N_A^q(n)-N_A^q(n-s)+N_A^q(n-2s)+\cdots =N_{{A\setminus\{s\}}}^q(n)+q_{{A\setminus\{s\}}}(n-s)-q_{{A\setminus\{s\}}}(n-2s)+\cdots 
\end{equation}
\end{theorem}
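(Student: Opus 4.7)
The plan is to follow the strategy used in Theorem~\ref{roverA}, adapted from partitions with repetition to distinct partitions. In that proof one multiplied the generating function by $(1-x^s)$ in order to cancel the factor $\frac{1}{1-x^s}$ from the product side; here I would instead multiply the generating function from part~(c) of Theorem~\ref{iden-1} by $\frac{1}{1+x^s}$, which is precisely the factor needed to remove $(1+x^s)$ from $\prod_{a\in A}(1+x^a)$.

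Concretely, starting from
\[\sum_{n=1}^{\infty}N_A^q(n)x^n=\left(\prod_{a\in A}(1+x^a)\right)\left(\sum_{b\in A}\frac{x^b}{1+x^b}\right),\]
I would multiply both sides by $\frac{1}{1+x^s}$ and split the inner sum as $\frac{x^s}{1+x^s}+\sum_{b\in A\setminus\{s\}}\frac{x^b}{1+x^b}$. Distributing, the right-hand side decomposes into two pieces: the first is $\frac{x^s}{1+x^s}\prod_{a\in A\setminus\{s\}}(1+x^a)$, and the second is precisely the generating function of $N_{A\setminus\{s\}}^q(n)$ by another application of Theorem~\ref{iden-1}(c).

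To finish, I would expand $\frac{1}{1+x^s}=\sum_{k\geq 0}(-1)^k x^{ks}$ and $\frac{x^s}{1+x^s}=\sum_{k\geq 1}(-1)^{k-1} x^{ks}$ and then read off the coefficient of $x^n$ on both sides. The left-hand side produces the alternating sum $N_A^q(n)-N_A^q(n-s)+N_A^q(n-2s)-\cdots$ that appears in the statement; on the right-hand side, the first piece contributes $q_{A\setminus\{s\}}(n-s)-q_{A\setminus\{s\}}(n-2s)+\cdots$ via~(\ref{gf2}) applied to $A\setminus\{s\}$, and the second contributes $N_{A\setminus\{s\}}^q(n)$. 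Equating yields the claimed identity. The main thing to watch, in contrast with the proof of Theorem~\ref{roverA}, is that the multiplier $\frac{1}{1+x^s}$ is an infinite series rather than the polynomial $1-x^s$; this is exactly what forces the resulting recurrence to carry the infinite tail ``$+\cdots$'' on both sides (the sums are of course finite for each fixed $n$, since $N_A^q$, $N_{A\setminus\{s\}}^q$ and $q_{A\setminus\{s\}}$ vanish on negative arguments).
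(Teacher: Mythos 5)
Your proposal is correct and follows essentially the same route as the paper: the paper likewise multiplies the generating function from Theorem~\ref{iden-1}(c) by $\frac{1}{1+x^s}$, splits off the $b=s$ term to obtain $\frac{x^s}{1+x^s}\prod_{a\in A\setminus\{s\}}(1+x^a)$ plus the generating function of $N^q_{A\setminus\{s\}}(n)$, and equates coefficients using~(\ref{gf2}). Your added remark on why the recurrence acquires an infinite (but terminating) alternating tail is a correct and helpful gloss on the same argument.
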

\begin{proof}
By Theorem \ref{iden-1},
\begin{align*}
\frac{1}{1+x^s}\sum_{n=1}^{\infty}N^q_A(n)x^n=&\left(\prod_{a\in A\setminus\{s\}}(1+x^a)\right)\left(\sum_{b\in A\setminus\{s\}}\frac{x^b}{1+x^b}\right)\\&+\frac{x^s}{1+x^s}\left(\prod_{a\in A\setminus\{s\}}(1+x^a)\right).
\end{align*}
Now writing the products in the right side as infinite sums in accordance with (\ref{gf2}) and (c) of Theorem \ref{iden-1}, and equating the coefficients of $x^n$ on both sides gives the expected end.
\end{proof}
\subsection{ Interplay with Carlitz Compositions}
In this subsection we find some connections between the number of Carlitz's compositions, 2-adic valuation and the number of parts in the distinct partitions  in terms of finite discrete convolutions.
\begin{theorem}\label{iden-5}
Let $cl_A(m)$ be the number of Carlitz compositions of $m$. We have
\begin{description}
\item{(a)}
\begin{equation}
cl_A(n)=\sum_{k=0}^{n-1}cl_A(k)\tau_A^s(n-k),
\end{equation}
\item{(b)}
\begin{equation}
\sum_{k=0}^{n}cl_A(k)q_A(n-k)-\sum_{t=0}^{n-1}cl_A(t)N^q_A(n-t)=q_A(n)
\end{equation}
with $cl_A(0)=1$.
\end{description}
\end{theorem}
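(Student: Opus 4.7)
The plan is to reduce both parts to a single generating-function identity for $cl_A$, namely
\[
C_A(x):=\sum_{n\ge 0}cl_A(n)x^n=\frac{1}{1-\tau_A^s(x)},\qquad\text{where }\tau_A^s(x)=\sum_{b\in A}\frac{x^b}{1+x^b},
\]
which is exactly part (a) once one equates coefficients of $x^n$ on both sides of $C_A(x)(1-\tau_A^s(x))=1$ (using $cl_A(0)=1$).

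To prove this generating-function identity, I would classify Carlitz compositions by their last part. For each $b\in A$, let $C_{A,b}(x)$ denote the generating function for Carlitz compositions ending in $b$, so that $C_A(x)=1+\sum_{b\in A}C_{A,b}(x)$. Removing the final part $b$ from such a composition yields either the empty composition or a Carlitz composition whose last part lies in $A\setminus\{b\}$, giving the recurrence
\[
C_{A,b}(x)=x^b\Bigl(1+\sum_{a\in A,\,a\ne b}C_{A,a}(x)\Bigr)=x^b\bigl(C_A(x)-C_{A,b}(x)\bigr).
\]
Solving yields $C_{A,b}(x)=\dfrac{x^b}{1+x^b}C_A(x)$. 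Summing over $b\in A$ and using the expression $C_A(x)=1+\sum_b C_{A,b}(x)$ produces $C_A(x)=1+C_A(x)\tau_A^s(x)$, i.e.\ $C_A(x)(1-\tau_A^s(x))=1$. Extracting the coefficient of $x^n$ for $n\ge 1$ in $C_A(x)-1=C_A(x)\tau_A^s(x)$ gives (a).

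For part (b), I would observe that the claimed identity is the coefficient-of-$x^n$ form of the generating-function relation
\[
C_A(x)\bigl(Q_A(x)-N^q_A(x)\bigr)=Q_A(x),
\]
where $Q_A(x)=\prod_{a\in A}(1+x^a)$ and $N^q_A(x)=\sum_{n\ge 1}N^q_A(n)x^n$. But by part (c) of Theorem~\ref{iden-1} one has $N^q_A(x)=Q_A(x)\tau_A^s(x)$, so the desired identity is equivalent to $C_A(x)Q_A(x)(1-\tau_A^s(x))=Q_A(x)$, which follows immediately from multiplying the identity $C_A(x)(1-\tau_A^s(x))=1$ established in part (a) by $Q_A(x)$. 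Equating coefficients of $x^n$ then yields (b).

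The only genuinely nontrivial step is establishing the generating function for $C_A(x)$; the rest is routine bookkeeping. The small care points will be handling the empty composition correctly (the constant term $cl_A(0)=1$ is what produces the inhomogeneous term $1$ in $C_A(x)(1-\tau_A^s(x))=1$) and making sure the range of summation in part (b) matches the fact that $N^q_A(x)$ has no constant term, so that the second sum legitimately stops at $t=n-1$.
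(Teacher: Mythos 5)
Your proof is correct and follows essentially the same route as the paper: both parts are read off as coefficient identities from the generating function $\sum_{n\ge 0}cl_A(n)x^n=\bigl(1-\sum_{a\in A}\tfrac{x^a}{1+x^a}\bigr)^{-1}$, with (b) obtained by multiplying through by $\prod_{a\in A}(1+x^a)$ and invoking part (c) of Theorem~\ref{iden-1}. The only difference is that the paper cites this generating function from Heubach and Mansour, whereas you rederive it via the last-part decomposition $C_{A,b}(x)=x^b\bigl(C_A(x)-C_{A,b}(x)\bigr)$; that derivation is standard and correct, so your argument is self-contained where the paper's relies on a reference.
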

\begin{proof}
Heubach and Mansour \cite{heubach} derived the following generating function for the number of Carlitz compositions of $n$ with parts from the set $A$:
\begin{equation}\label{carl}
\sum_{n=0}^{\infty}cl_A(n)x^n=\frac{1}{1-\sum_{a\in A}\frac{x^a}{1+x^a}},
\end{equation}
which can be written as
\[\left(\sum_{n=0}^{\infty}cl_A(n)x^n\right)\left(1-\sum_{a\in A}\frac{x^a}{1+x^a}\right)=1.
\]
The coefficient of $x^n$ on the left side term is 
\[cl_A(n)-\sum_{k=0}^{n-1}cl_A(k)\tau_A^s(n-k).
\]
Since the coefficient of $x^n$ on the right side is zero, we have (a).

Multiplying both sides of (\ref{carl}) by $\frac{1}{\prod_{a\in A}(1+x^a)}$ gives
\begin{center}
 $\Big(\sum_{n=0}^{\infty}cl_A(n)x^n\Big)$ $\left(\frac{1}{\prod_{a\in A}(1+x^a)}\right)=\frac{1}{\prod_{a\in A}(1+x^a)-\left(\prod_{a\in A}(1+x^a)\right)\left(\sum_{a\in A}\frac{x^a}{1+x^a}\right)}.$
\end{center}
This can be arranged as
\[\left(\sum_{n=0}^{\infty}cl_A(n)x^n\right)\left[\left(\prod_{a\in A}(1+x^a)\right)-\left(\prod_{a\in A}(1+x^a)\right)\left(\sum_{a\in A}\frac{x^a}{1+x^a}\right)\right]=\prod_{a\in A}(1+x^a).
\]
Then in view of (c) of Theorem \ref{iden-1} and (\ref{gf2}) we can write
\[\left(\sum_{n=0}^{\infty}cl_A(n)x^n\right)\left[\sum_{l=0}^{\infty}q_A(l)x^l-\sum_{m=1}^{\infty}N^q_A(m)x^m\right]=\sum_{k=0}^{\infty}q_A(k)x^k.
\]
Now equating the coefficients of like powers of $x$ on both sides gives (b).
\end{proof}
\begin{corollary}
let $cl_b(m)$ be the number of Carlitz-binary compositions of $m$. We have
\begin{description}
\item{(a)}
\begin{equation}
cl_b(n)=\sum_{k=0}^{n-1}cl_b(k)(1-\vartheta_2(n-k)),
\end{equation}
\item{(b)}
\begin{equation}
\sum_{k=0}^{n}cl_b(k)\left(1-h(n-k)\right)=1
\end{equation}
with $h(0)=0$.
\end{description}
\end{corollary}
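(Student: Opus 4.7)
The plan is to specialize Theorem~\ref{iden-5} to the set $A = \{1, 2, 2^2, 2^3, \ldots\}$ of nonnegative powers of $2$. Under this choice, a Carlitz composition with parts from $A$ is precisely a Carlitz-binary composition, so $cl_A(n) = cl_b(n)$. The remaining work is to evaluate the arithmetic functions $\tau_A^s$, $q_A$, and $N^q_A$ at this $A$, and then substitute.

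\textbf{Part (a).} First I would compute $\tau_A^s(m)$ explicitly for $A = \{2^k : k \geq 0\}$. The divisors of $m$ lying in $A$ are $1, 2, 4, \ldots, 2^{\vartheta_2(m)}$. For $0 \leq k < \vartheta_2(m)$ the quotient $m/2^k$ is even and so contributes $(-1)^{m/2^k - 1} = -1$, while for $k = \vartheta_2(m)$ the quotient is odd and contributes $+1$. Summing gives
\[
\tau_A^s(m) \;=\; 1 - \vartheta_2(m).
\]
Plugging this together with $cl_A = cl_b$ into part (a) of Theorem~\ref{iden-5} yields exactly
$cl_b(n) = \sum_{k=0}^{n-1} cl_b(k)(1 - \vartheta_2(n-k))$.

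\textbf{Part (b).} Here I would reuse the two facts established in the proof of Corollary~\ref{ham-rec}: by the basis representation theorem, $q_A(m) = 1$ for every $m \geq 0$, and by definition $N^q_A(m) = h(m)$. Substituting these along with $cl_A = cl_b$ into part (b) of Theorem~\ref{iden-5} gives
\[
\sum_{k=0}^{n} cl_b(k) - \sum_{t=0}^{n-1} cl_b(t)\, h(n-t) \;=\; 1.
\]
Since $h(0) = 0$ by convention, the $t = n$ term in the second sum is zero, so it can be absorbed and the two sums combined to give $\sum_{k=0}^{n} cl_b(k)(1 - h(n-k)) = 1$.

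\textbf{Main obstacle.} There is no serious obstacle; the only step that requires a bit of care is the sign computation for $\tau_A^s(m)$, namely checking which divisor $2^k$ of $m$ yields an odd quotient. Everything else is direct substitution into the previously established Theorem~\ref{iden-5} using the identifications of $q_A$, $N^q_A$, and $\tau_A^s$ at $A = \{1,2,4,\dots\}$.
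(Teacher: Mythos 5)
Your proposal is correct and follows essentially the same route as the paper: specialize Theorem~\ref{iden-5} to $A=\{1,2,2^2,\dots\}$, identify $cl_A=cl_b$, $q_A(m)=1$, $N^q_A(m)=h(m)$, and $\tau_A^s(m)=1-\vartheta_2(m)$, then substitute. The only difference is that you spell out the sign computation for $\tau_A^s$ and the absorption of the $t=n$ term using $h(0)=0$, which the paper leaves implicit.
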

\begin{proof}
Let $A=\{1,2,2^2,\cdots\}$. Then in view of basis representation theorem we have $q_A(m)=1$ and $N_A^q(m)=h(m)$ for every positive integer $m$. Moreover, we have $\tau^s_A(m)=1-\vartheta_2(m)$.  The result is immediate while we substitute these values in Theorem \ref{iden-5}.
\end{proof}
\section{ Asymptotic Estimate of $N_A^p(n)$ when $A$ is a Finite Set}
Throughout this section we assume $A=\{a_1,a_2,\cdots ,a_k\}$, a finite set of positive integers with $\gcd(A)=1$. 
Based on this assumption we find an asymptotic estimate for $N_A^p(n)$.
To that end, we take cue from an earlier paper of the author \cite{david1}, where the function $p_A(n)$ was considered. It was shown there that $p_A(a_1a_2\cdots a_kl+r)$ is a polynomial in $l$ of degree $k$ for each $r\in\{0,1,2,\cdots ,a_1a_2\cdots a_k-1\}$ with the identical leading coefficient $\frac{(a_1a_2\cdots a_k)^{k-2}}{(k-1)!}$. This fact was used in \cite{david1} to arrive at the estimate of Netto \cite{Netto}:
\[p_A(n)\sim\frac{n^{k-1}}{(a_1a_2\cdots a_k)(k-1)!}.
\]

A quasi-polynomial representation of $p_A(n)$  found in \cite{david1} is recalled in the following lemma.
\begin{lemma}\label{lemma2}
Let $l$ be a non-negative integer and let $A=\{a_1,a_2,\cdots ,a_k\}$ be a set of positive integers such that $\gcd(A)=1$. For each $r\in\{0,1,\cdots,a_1a_2\cdots a_k-1\}$, the term $p_A(a_1a_2\cdots a_kl+r)$ is a polynomial in $l$ of degree $k-1$ with the leading coefficient $\frac{(a_1a_2\cdots a_k)^{k-2}}{(k-1)!}$.
\end{lemma}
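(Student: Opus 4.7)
The plan is to extract $p_A(n)$ from its generating function $f(x) = \prod_{i=1}^{k}(1-x^{a_i})^{-1}$ via partial fractions and then substitute $n = Nl + r$ where $N = a_1 a_2 \cdots a_k$. First I would locate the poles of $f$: each factor $(1-x^{a_i})^{-1}$ contributes simple poles at the $a_i$-th roots of unity, and since every $a_i$ divides $N$, the poles of $f$ lie among the $N$-th roots of unity. The order of the pole at a root of unity $\zeta$ equals $\#\{\,i : \zeta^{a_i}=1\,\}$. The hypothesis $\gcd(A)=1$ is critical here: if $\zeta^{a_i}=1$ for every $i$, then the order of $\zeta$ divides $\gcd(A)=1$, forcing $\zeta = 1$. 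Consequently the pole at $x=1$ has order exactly $k$, while every other pole has order at most $k-1$.

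Next I would write the partial fraction decomposition
\[
f(x) = \sum_{j=1}^{k}\frac{c_j}{(1-x)^j} + \sum_{\zeta \neq 1}\sum_{j=1}^{m(\zeta)}\frac{c_{\zeta,j}}{(1-x/\zeta)^j},
\]
where $m(\zeta) \leq k-1$ and the second sum runs over the remaining $N$-th roots of unity. The leading coefficient at $x=1$ is
\[
c_k = \lim_{x\to 1}(1-x)^k f(x) = \prod_{i=1}^{k}\lim_{x\to 1}\frac{1-x}{1-x^{a_i}} = \prod_{i=1}^{k}\frac{1}{a_i} = \frac{1}{N}.
\]
Reading off coefficients via $[x^n](1-x/\zeta)^{-j} = \binom{n+j-1}{j-1}\zeta^{-n}$, one obtains
\[
p_A(n) = \sum_{j=1}^{k} c_j \binom{n+j-1}{j-1} + \sum_{\zeta\neq 1} P_\zeta(n)\,\zeta^{-n},
\]
where each $P_\zeta$ is a polynomial of degree at most $k-2$.

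Finally I would substitute $n = Nl + r$. Since $\zeta^N = 1$ for every $N$-th root of unity, one has $\zeta^{-n} = \zeta^{-r}$, a constant in $l$. Hence every term on the right is a polynomial in $l$: the piece $\sum_j c_j \binom{Nl+r+j-1}{j-1}$ has degree exactly $k-1$ with leading coefficient $c_k \cdot N^{k-1}/(k-1)! = N^{k-2}/(k-1)!$, while all remaining contributions are polynomials in $n$, and hence in $l$, of degree at most $k-2$. This yields both the polynomial structure and the claimed leading coefficient $(a_1 a_2 \cdots a_k)^{k-2}/(k-1)!$.

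The routine parts are the pole-counting and the coefficient extraction; the main subtlety is justifying that non-principal contributions cannot inflate the leading term, which is where one must exploit carefully that $\zeta^{-n}$ collapses to the constant $\zeta^{-r}$ once we fix the residue class $n \equiv r \pmod{N}$. In particular, without this periodicity one would only recover an asymptotic statement rather than the exact polynomial identity asserted in the lemma.
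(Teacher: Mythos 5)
Your argument is correct and complete, but note that the paper does not actually prove this lemma: it is explicitly \emph{recalled} from the author's earlier work \cite{david1}, so there is no in-text proof to compare against. Your partial-fraction route is the classical one for the denumerant (Cayley--Sylvester--Schur): the hypothesis $\gcd(A)=1$ enters exactly where it must, to force every pole other than $x=1$ to have order at most $k-1$; the residue computation $c_k=\lim_{x\to 1}(1-x)^k\prod_i(1-x^{a_i})^{-1}=1/(a_1\cdots a_k)$ is right; and the observation that $\zeta^{-n}=\zeta^{-r}$ is constant on the residue class $n\equiv r\pmod{a_1\cdots a_k}$ (legitimate because each $a_i$ divides the product, so every pole is an $(a_1\cdots a_k)$-th root of unity) is precisely what upgrades Netto's asymptotic $p_A(n)\sim n^{k-1}/\bigl((k-1)!\,a_1\cdots a_k\bigr)$ to the exact quasi-polynomial statement; the leading coefficient in $l$ comes out as $\frac{1}{N}\cdot\frac{N^{k-1}}{(k-1)!}=\frac{N^{k-2}}{(k-1)!}$ with $N=a_1\cdots a_k$, as claimed, and it is nonzero, so the degree is exactly $k-1$. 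The one contrast worth recording is methodological: the technique this paper deploys for the analogous claim about $N_A^p$ in Section~3 is entirely elementary --- an induction on $|A|$ driven by the recurrence of Theorem~\ref{roverA} (and, for $p_A$ itself, by $p_A(n)-p_A(n-s)=p_{A\setminus\{s\}}(n)$) --- which avoids complex roots of unity at the cost of substantially heavier bookkeeping, whereas your analytic proof is shorter, self-contained, and makes the role of $\gcd(A)=1$ more transparent.
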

The main result of this section is given below.
\begin{theorem}
Let $A=\{a_1,a_2,\cdots ,a_k\}$ be a set of positive integers such that $\gcd(A)=1$. Then we have
\begin{equation}\label{mest}
N_A^p(n)\sim\frac{1}{k!}\frac{\frac{1}{a_1}+\frac{1}{a_2}+\cdots +\frac{1}{a_k}}{a_1a_2\cdots a_k}n^k.
\end{equation}
\end{theorem}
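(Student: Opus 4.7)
The plan is to use the convolution identity (b) of Theorem \ref{iden-1} together with the asymptotic estimate for $p_A$ implied by Lemma \ref{lemma2}, and reduce the problem to a straightforward arithmetic-sum estimate. Write $k=|A|$. By Theorem \ref{iden-1}(b),
\[
N_A^p(n)=\sum_{m=1}^{n}\tau_A(m)\,p_A(n-m).
\]
Since $\tau_A(m)=\sum_{a\in A,\;a\mid m}1$, the sum splits as
\[
N_A^p(n)=\sum_{a\in A}\sum_{j=1}^{\lfloor n/a\rfloor}p_A(n-ja).
\]

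Next, I would invoke Lemma \ref{lemma2} in the strengthened form it provides: for each residue $r$ modulo $a_1a_2\cdots a_k$, the value $p_A(a_1\cdots a_k\,l+r)$ is a polynomial in $l$ of degree $k-1$ with identical leading coefficient $(a_1\cdots a_k)^{k-2}/(k-1)!$. Multiplying through, this yields
\[
p_A(m)=\frac{m^{k-1}}{(a_1\cdots a_k)(k-1)!}+O(m^{k-2})
\]
uniformly. Substituting this into the inner sum and reindexing $i=\lfloor n/a\rfloor-j$, the dominant contribution from a fixed $a\in A$ is
\[
\sum_{j=1}^{\lfloor n/a\rfloor}\frac{(n-ja)^{k-1}}{(a_1\cdots a_k)(k-1)!}
=\frac{a^{k-1}}{(a_1\cdots a_k)(k-1)!}\sum_{i=0}^{\lfloor n/a\rfloor-1}i^{k-1}+\text{(boundary terms)}.
\]
Applying Faulhaber's estimate $\sum_{i=0}^{M-1}i^{k-1}\sim M^k/k$ with $M=\lfloor n/a\rfloor$ gives
\[
\sum_{j=1}^{\lfloor n/a\rfloor}p_A(n-ja)\sim\frac{a^{k-1}}{(a_1\cdots a_k)(k-1)!}\cdot\frac{(n/a)^k}{k}=\frac{n^k}{(a_1\cdots a_k)\,k!\cdot a}.
\]

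Finally, summing this asymptotic equivalence over all $a\in A$ (a finite set, so the sum of $o(n^k)$ error terms remains $o(n^k)$) produces
\[
N_A^p(n)\sim\frac{n^k}{(a_1\cdots a_k)\,k!}\sum_{a\in A}\frac{1}{a},
\]
which is precisely \eqref{mest}. The main technical obstacle is controlling the error terms from the non-leading part of the quasi-polynomial expansion of $p_A$ and from the fractional-part discrepancy in $\lfloor n/a\rfloor$; however, since Lemma \ref{lemma2} gives a pointwise polynomial bound and the number of summands per $a$ is $O(n)$, the aggregate error is $O(n^{k-1})$, which is absorbed into the $\sim$ notation. An alternative route, worth noting as a sanity check, would extract the same constant by inspecting the pole of order $k+1$ at $x=1$ of the generating function in Theorem \ref{iden-1}(a), using $\prod_{a\in A}(1-x^a)^{-1}\sim (a_1\cdots a_k)^{-1}(1-x)^{-k}$ and $\sum_{b\in A}x^b/(1-x^b)\sim(1-x)^{-1}\sum_{b\in A}1/b$ as $x\to 1^-$, then applying a standard Tauberian theorem to recover the coefficient asymptotics.
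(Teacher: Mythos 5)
Your argument is correct, but it follows a genuinely different route from the paper. The paper does not use the convolution identity of Theorem \ref{iden-1}(b) at all in this section; instead it runs an induction on $k=|A|$ driven by the recurrence of Theorem \ref{roverA}, $N_A^p(n)-N_A^p(n-s)=p_A(n-s)+N_{A\setminus\{s\}}^p(n)$, to prove the stronger structural fact that $N_A^p(a_1\cdots a_k l+r)$ is itself a quasi-polynomial in $l$ of degree $k$ with explicit leading coefficient $\frac{1}{k!}\sum_i a_i^{k-2}\prod_{j\neq i}a_j^{k-1}$ in every residue class $r$, from which the asymptotic falls out. Your approach short-circuits this: writing $N_A^p(n)=\sum_{a\in A}\sum_{j=1}^{\lfloor n/a\rfloor}p_A(n-ja)$ (a correct unpacking of $\tau_A$ in the convolution), inserting the uniform estimate $p_A(m)=\frac{m^{k-1}}{(a_1\cdots a_k)(k-1)!}+O(m^{k-2})$ that Lemma \ref{lemma2} delivers (uniformity is legitimate since there are only finitely many residue classes), and summing via Faulhaber gives the constant directly with an $O(n^{k-1})$ aggregate error, since each inner sum has $O(n)$ terms. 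Your bookkeeping of the main term is right: $\sum_j (n-ja)^{k-1}=a^{k-1}\sum_i i^{k-1}+O(n^{k-1})\sim \frac{n^k}{ak}$, yielding $\frac{n^k}{(a_1\cdots a_k)k!}\sum_a \frac1a$, which matches \eqref{mest}. What each approach buys: yours is shorter, avoids the delicate gcd/induction machinery (the $d=\gcd(a_1,\dots,a_{k-1})$ rescaling is the most technical part of the paper's proof), and needs only the crude form of Lemma \ref{lemma2}; the paper's proof is longer but establishes the finer result that $N_A^p$ is a quasi-polynomial of degree $k$ with constant leading coefficient across residue classes, which is more information than the bare asymptotic. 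The closing remark about singularity analysis at $x=1$ is a reasonable sanity check but would need the observation that, because $\gcd(A)=1$, the poles at other roots of unity have order at most $k$, so it is not a substitute for the main argument as written.
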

\begin{proof}
The crux of this proof is to show that $N_A^p(a_1a_2\cdots a_kl+r)$ is a polynomial in $l$ of degree $k$ with the leading coefficient \[\frac{\sum_{i=1}^{k}\left(a_i^{k-2}\prod_{\substack{j\neq i\\ 0\leq j\leq k}}a_j^{k-1}\right)}{k!}\] for each $r\in\{0,1,\cdots ,a_1a_2\cdots a_k-1\}$. Once this is established, then we will have
\[\lim_{l\to\infty}\frac{N^p_A(a_1a_2\cdots a_kl+r)}{(a_1a_2\cdots a_kl+r)^{k}}=\frac{1}{k!}\frac{\frac{1}{a_1}+\frac{1}{a_2}+\cdots +\frac{1}{a_k}}{a_1a_2\cdots a_k}
\]
for each $r\in\{0,1,\cdots ,a_1a_2\cdots a_k-1\}$, which is equivalent to the asymptotic estimate (\ref{mest}).

We will establish this main claim using induction over $|A|=k$. Since the term $a_i^{k-2}$ is involved in the leading coefficient, we take $k=3$ as the initial case for induction. Nevertheless the cases $k=1,2$ are needed to realise the case $k=3$. 

When $|A|=1$ the only way out is $a_1=1$. In this case we have $N_A^p(n)=n$, and the targeted estimate follows. To proceed further, we need the following observation: 
\[
N_{\{a\}}^p(n)=\begin{cases}\frac{n}{a} &\text{ if }a\mid n;\\
0&\text{ otherwise. }
\end{cases}
\]
Assume  $|A|=2$. That is, $A=\{a_1,a_2\}$ with $\gcd(a_1,a_2)=1$.
Fix $r\in\{1,2,\cdots ,a_1\}$. Applying Theorem \ref{roverA} $a_1$ times, we get
\begin{align*}
N_A^p(a_1a_2l+r)-N_A^p(a_1a_2(l-1)+r)&=\sum_{m=1}^{a_1}p_A(a_1a_2l+r-ma_2)\\ &\ \ \ +\sum_{t=0}^{a_1-1}N_{\{a_1\}}^p(a_1a_2l+r-ta_2).
\end{align*}
Since $\gcd(a_1,a_2)=1$, the congruence equation $a_2t\equiv r\pmod {a_1}$ has a unique solution modulo $a_1$, say $t^*$. This gives
\begin{align*}
\sum_{t=0}^{a_1-1}N_{\{a_1\}}^p(a_1a_2l+r-ta_2)&=\frac{a_1a_2l+r-t^*a_2}{a_1}\\ 
&=a_2l+\frac{r-t^*a_2}{a_1}\\
&=a_2l+k_1
\end{align*}
for each $r\in\{0,1,\cdots ,a_1-1\}$, where $k_1$ is an integer constant.

Next, we analyze the sum $\sum_{m=1}^{a_1}p_A(a_1a_2l+r-ma_2)$. Let $m\in\{1,2,\cdots ,a_1\}$. \newline 
Case i. Assume $r-ma_2<0$. In this case, consider the term $p_A(a_1a_2l+r-ma_2)$, which can be written as $p_A(a_1a_2(l-1)+a_1a_2+r-ma_2)$. Here we note that $0\leq a_1a_2+r-ma_2\leq a_1a_2-1$. Now, in view of Lemma \ref{lemma2}, we have
\begin{align*}
p_A(a_1a_2(l-1)+a_1a_2+r-ma_2)=(l-1)+c_m,
\end{align*}
where $c_m$ is an integer constant.\newline 
Case ii. Assume $r-ma_2\geq 0$. Then by Lemma \ref{lemma2}, we have $p_A(a_1a_2l+r-ma_2)=l+d_m$,
where $d_m$ is an integer constant.

In both the cases, $p_A(a_1a_2l+r-ma_2)=l+b_m$ for some constant $b_m$. This gives
\begin{align*}
\sum_{m=1}^{a_1}p_A(a_1a_2l+r-ma_2)&=\sum_{m=1}^{a_1}(l+b_m)\\
&=a_1l+k_2,
\end{align*}
where $k_2=\sum_{m=1}^{a_1}b_m$ is an integer constant. \newline 
Consequently,
\[N_A^p(a_1a_2l+r)-N_A^p(a_1a_2(l-1)+r)=(a_1+a_2)l+c,
\]
where $c=k_1+k_2$ is an integer constant.

This gives
\[N_A^p(a_1a_2l+r)=(a_1+a_2)\left(\frac{l^2+l}{2}\right)+cl+N_A^p(r).
\]
Subsequently, we have
\[\lim_{l\to\infty}\frac{N_A^p(a_1a_2l+r)}{(a_1a_2l+r)^2}
=\frac{\frac{1}{a_1}+\frac{1}{a_2}}{2!(a_1a_2)}.
\]
Since this is true for each $r\in\{0,1,\cdots ,a_1a_2-1\}$, the targeted estimate follows for the case $k=2$.

Now we have the initial case of the induction, that is, $k=3$. Assume $A=\{a_1,a_2,a_3\}$ with $\gcd(a_1,a_2,a_3)=1$. Fix $r\in\{0,1,\cdots ,a_1a_2a_3-1\}$. 

Let $d=\gcd(a_1,a_2)$. Applying Theorem \ref{roverA} $a_1a_2$ times, we get
\begin{align*} 
&N_A^p(a_1a_2a_3l+r)-N_A^p(a_1a_2a_3(l-1)+r)\\ 
&=\sum_{t=1}^{a_1a_2}p_A(a_1a_2a_3l+r-ta_3)+\sum_{m=0}^{a_1a_2-1}N_{A\setminus \{a_3\}}^p(a_1a_2a_3l+r-ma_3)\ \ \ \ \ \ \ \ \ \ \ \ \ \ \ \ \ \ \ \ \ \ \\ 
&=\sum_{t=1}^{a_1a_2}p_A(a_1a_2a_3l+r-ta_3)+\sum_{\substack{0\leq m\leq a_1a_2-1\\ d\mid r-ma_3}}N_{ \{a_1,a_2\}}^p\left(\frac{a_1a_2a_3l}{d}+\frac{r-ma_3}{d}\right)
\end{align*}
\begin{align}\label{eqn1}
=\sum_{t=1}^{a_1a_2}p_A(a_1a_2a_3l+r-ta_3)+\sum_{\substack{0\leq m\leq a_1a_2-1\\ d\mid r-ma_3}}N^p_{\{\frac{a_1}{d},\frac{a_2}{d}\}}\left(\frac{a_1}{d}\frac{a_2}{d}\frac{a_3d^2}{d}l+\frac{r-ma_3}{d}\right).\ \ 
\end{align}
If $r-ta_3<0$, then by Lemma \ref{lemma2}, $p_A(a_1a_2a_3(l-1)+a_1a_2a_3+r-ta_3)$ is a polynomial in $l-1$ of degree $2$ with the leading coefficient $\frac{a_1a_2a_3}{2!}$. If $r-ta_3\geq 0$, then again by Lemma \ref{lemma2}, $p_A(a_1a_2a_3l+r-ta_3)$ is a polynomial in $l$ of degree $2$ with the leading coefficient $\frac{a_1a_2a_3}{2!}$. We observe that, in both the cases, $p_A(a_1a_2a_3l+r-ta_3)$ is a polynomial in $l$ of degree $2$ with the leading coefficient $\frac{a_1a_2a_3}{2!}$. This in turn gives $\sum_{t=1}^{a_1a_2}p_A(a_1a_2a_3l+r-ta_3)$ is a polynomial in $l$ of degree 2 with the leading coefficient $\frac{a_1^2a_2^2a_3}{2!}$.  

Since $\gcd(d,a_3)=1$,  the congruence equation $a_3m\equiv r\pmod d$ has a unique solution modulo $d$. Consequently, there exists a unique integer   $m_i\in\{id+0,id+1,\cdots ,id+(d-1)\}$ such that $a_3m_i\equiv r\pmod d$ for each $0\leq i\leq\frac{a_1a_2}{d}-1$. Based on these observations, we can write

\[
\sum_{\substack{0\leq m\leq a_1a_2-1\\ d\mid r-ma_3}}N^p_{\{\frac{a_1}{d},\frac{a_2}{d}\}}\left(\frac{a_1}{d}\frac{a_2}{d}\frac{a_3d^2}{d}l+\frac{r-ma_3}{d}\right)\ \ \ \ \ \ \ \ \ \ \ \ \ \ \ \ \ \ \ \ \ \ \ \ \ \ \ \ 
\]

\begin{align*}
&=\sum_{i=0}^{\frac{a_1a_2}{d}-1}N^p_{\{\frac{a_1}{d},\frac{a_2}{d}\}}\left(\frac{a_1}{d}\frac{a_2}{d}\frac{a_3d^2}{d}l+\frac{r-m_ia_3}{d}\right)\\
&=\sum_{i=0}^{\frac{a_1a_2}{d}-1}N^p_{\{\frac{a_1}{d},\frac{a_2}{d}\}}\left(\frac{a_1}{d}\frac{a_2}{d}\left(a_3dl+q_i\right)+r_i\right),
\end{align*}
where $q_i$ and $r_i$ were uniquely determined (in view of division algorithm) satisfying the relation:$\frac{r-m_ia_3}{d}=\frac{a_1}{d}\frac{a_2}{d}q_i+r_i$. Since $\gcd(\frac{a_1}{d},\frac{a_2}{d})=1$, from the case $|A|=2$, we have that \[N^p_{\{\frac{a_1}{d},\frac{a_2}{d}\}}\left(\frac{a_1}{d}\frac{a_2}{d}\left(a_3dl+q_i\right)+r_i\right)\] is a polynomial in $a_3dl+q_i$ of degree 2 with the leading coefficient $\frac{\frac{a_1}{d}+\frac{a_2}{d}}{2!}$. This in turn gives that $N^p_{\{\frac{a_1}{d},\frac{a_2}{d}\}}\left(\frac{a_1}{d}\frac{a_2}{d}\left(a_3dl+q_i\right)+r_i\right)$ is a polynomial in $l$ of degree 2 with the leading coefficient $\frac{\frac{a_1}{d}+\frac{a_2}{d}}{2!}a_3^2d^2$. Consequently, \[\sum_{i=0}^{\frac{a_1a_2}{d}-1}N^p_{\{\frac{a_1}{d},\frac{a_2}{d}\}}\left(\frac{a_1}{d}\frac{a_2}{d}\left(a_3dl+q_i\right)+r_i\right)\] is a polynomial in $l$ of degree 2 with the leading coefficient 
$
\frac{a_1a_2}{d}\frac{\frac{a_1}{d}+\frac{a_2}{d}}{2!}a_3^2d^2
=\frac{a_1^2a_2a_3^2+a_1a_2^2a_3^2}{2!}$.

Now in view of Equation (\ref{eqn1}), we have that:
$
N_A^p(a_1a_2a_3l+r)-N_A^p(a_1a_2a_3(l-1)+r)
$ is a polynomial in $l$ of degree 2 with the leading coefficient $\frac{a_1^2a_2^2a_3+a_1^2a_2a_3^2+a_1a_2^2a_3^2}{2!}$.

From this we infer that $N_A^p(a_1a_2a_3l+r)$ is a polynomial in $l$ of degree 3. Now if one assumes $c_3$ to be the leading coefficient of $N_A^p(a_1a_2a_3l+r)$, then in accordance with the previous observations, we have
\[
3c_3=\frac{a_1^2a_2^2a_3+a_1^2a_2a_3^2+a_1a_2^2a_3^2}{2!}.
\]
This gives 
\[c_3=\frac{a_1^2a_2^2a_3+a_1^2a_2a_3^2+a_1a_2^2a_3^2}{3!}.
\]
Hence, the assertion is true for $k=3$. 

Assume that the assertion is true up to some $k-1\geq 3$. Now we shall prove the assertion for $k$. 

Let $A=\{a_1,a_2,\cdots ,a_k\}$ be a set of positive integers such that $\gcd(a_1,a_2,\cdots ,a_k)$\ $=1$. Let $d=\gcd(a_1,a_2,\cdots ,a_{k-1})$. Fix $r\in\{0,1,\cdots ,a_1a_2\cdots a_k-1\}$. Repeated application of Theorem \ref{roverA} $a_1a_2\cdots a_{k-1}$ times gives

$N_A^p(a_1a_2\cdots a_kl+r)-N_A^p(a_1a_2\cdots a_k(l-1)+r)$
\begin{align*}
&=\sum_{i=1}^{a_1a_2\cdots a_{k-1}}p_A(a_1a_2\cdots a_kl+r-ia_k)\\ &\ \ \ \ \ +\sum_{\substack{0\leq m\leq a_1a_2\cdots a_{k-1}-1\\ d|r-ma_k}}N_{A\setminus\{a_k\}}^p(a_1a_2\cdots a_kl+r-ma_k)\\
&=\sum_{i=1}^{a_1a_2\cdots a_{k-1}}p_A(a_1a_2\cdots a_kl+r-ia_k)\\ &\ \ \ \ \ +\sum_{\substack{0\leq m\leq a_1a_2\cdots a_{k-1}-1\\ d|r-ma_k}}N_{\{\frac{a_1}{d},\frac{a_2}{d},\cdots ,\frac{a_{k-1}}{d}\}}^p\left(\frac{a_1a_2\cdots a_kl}{d}+\frac{r-ma_k}{d}\right)\\
&=\sum_{i=1}^{a_1a_2\cdots a_{k-1}}p_A(a_1a_2\cdots a_kl+r-ia_k)
\end{align*}
\begin{align}\label{eqn2}
\ \ \ \ \ \ \ \ \ \  +\sum_{\substack{0\leq m\leq a_1a_2\cdots a_{k-1}-1\\ d|r-ma_k}}N_{\{\frac{a_1}{d},\frac{a_2}{d},\cdots ,\frac{a_{k-1}}{d}\}}^p\left(\frac{a_1}{d}\frac{a_2}{d}\cdots \frac{a_{k-1}}{d}\frac{d^{k-1}a_kl}{d}+\frac{r-ma_k}{d}\right).
\end{align}
By Lemma \ref{lemma2}, $p_A(a_1a_2\cdots a_kl+r-ia_k)$ is a polynomial in $l$ or $l-1$ (depending respectively on the bounds: $r-ia_k\geq 0$ or $r-ia_k<0$) of degree $k-1$ with the leading coefficient $\frac{(a_1a_2\cdots a_k)^{k-2}}{(k-1)!}$. In both the cases, $p_A(a_1a_2\cdots a_kl+r-ia_k)$ is  a polynomial in $l$ of degree $k-1$ with the leading coefficient $\frac{(a_1a_2\cdots a_k)^{k-2}}{(k-1)!}$. Consequently, $\sum_{i=1}^{a_1a_2\cdots a_{k-1}}p_A(a_1a_2\cdots a_kl+r-ia_k)$ is a polynomial in $l$ of degree $k-1$ with the leading coefficient $(a_1a_2\cdots a_{k-1})\frac{(a_1a_2\cdots a_k)^{k-2}}{(k-1)!}=\frac{(a_1a_2\cdots a_{k-1})^{k-1}a_k^{k-2}}{(k-1)!}$.

Since $\gcd(d,a_k)=1$, the equation $a_km\equiv r\pmod d$ has a unique solution modulo $d$. Consequently, there exists a unique integer $m_t\in\{td+0,td+1,\ldots,td+(d-1)\}$ such that $a_km_t\equiv r\pmod d$ for each $t\in\{0,1,\ldots , a_1a_2\cdots a_{k-1}-1\}$. In view of division algorithm, one can find a unique pair of integers $(q_t,r_t)$ such that $\frac{r-m_ta_k}{d}=\frac{a_1a_2\cdots a_{k-1}}{d^{k-1}}q_t+r_t$. Based on these observations, we can write 
\[\sum_{\substack{0\leq m\leq a_1a_2\cdots a_{k-1}-1\\ d|r-ma_k}}N_{\{\frac{a_1}{d},\frac{a_2}{d},\cdots ,\frac{a_{k-1}}{d}\}}^p\left(\frac{a_1}{d}\frac{a_2}{d}\cdots \frac{a_{k-1}}{d}\frac{d^{k-1}a_kl}{d}+\frac{r-ma_k}{d}\right)\ \ \ \ \ 
\]
\[=\sum_{t=0}^{\frac{a_1a_2\cdots a_{k-1}}{d}-1}N_{\{\frac{a_1}{d},\frac{a_2}{d},\cdots ,\frac{a_{k-1}}{d}\}}^p\left(\frac{a_1}{d}\frac{a_2}{d}\cdots \frac{a_{k-1}}{d}\left(d^{k-2}a_kl+q_t\right)+r_t\right).
\]
Since $\gcd\left(\frac{a_1}{d},\frac{a_2}{d},\cdots ,\frac{a_{k-1}}{d}\right)=1$, by induction assumption, we have that 
\[
N_{\{\frac{a_1}{d},\frac{a_2}{d},\cdots ,\frac{a_{k-1}}{d}\}}^p\left(\frac{a_1}{d}\frac{a_2}{d}\cdots \frac{a_{k-1}}{d}\left(d^{k-2}a_kl+q_t\right)+r_t\right)
\]
is a polynomial in $d^{k-2}a_kl+q_t$ of degree $k-1$ with the leading coefficient 
\[\frac{\sum_{s=1}^{k-1}\left(\frac{a_s}{d}\right)^{k-3}\prod_{\substack{j\neq s\\ 0\leq j\leq k-1}}\left(\frac{a_j}{d}\right)^{k-2}}{(k-1)!}.
\]
Consequently, $N_{\{\frac{a_1}{d},\frac{a_2}{d},\cdots ,\frac{a_{k-1}}{d}\}}^p\left(\frac{a_1}{d}\frac{a_2}{d}\cdots \frac{a_{k-1}}{d}\left(d^{k-2}a_kl+q_t\right)+r_t\right)$ is a polynomial in $l$ of degree $k-1$ with the leading coefficient \[\frac{(a_kd^{k-2})^{k-1}}{d^{k-3}d^{(k-2)(k-2)}}\frac{\sum_{s=1}^{k-1}a_s^{k-3}\prod_{\substack{j\neq s\\ 0\leq j\leq k-1}}a_j^{k-2}}{(k-1)!}.\]
This gives that \[\sum_{t=0}^{\frac{a_1a_2\cdots a_{k-1}}{d}-1}N_{\{\frac{a_1}{d},\frac{a_2}{d},\cdots ,\frac{a_{k-1}}{d}\}}^p\left(\frac{a_1}{d}\frac{a_2}{d}\cdots \frac{a_{k-1}}{d}\left(d^{k-2}a_kl+q_i\right)+r_i\right)\]
is a polynomial in $l$ of degree $k-1$ with the leading coefficient 
\begin{flushleft}
$\frac{a_1a_2\cdots a_{k-1}}{d}\frac{(a_kd^{k-2})^{k-1}}{d^{k-3}d^{(k-2)(k-2)}}\frac{\sum_{s=1}^{k-1}a_s^{k-3}\prod_{\substack{j\neq s\\ 0\leq j\leq k-1}}a_j^{k-2}}{(k-1)!}
=\frac{a_k^{k-1}\sum_{s=1}^{k-1}a_s^{k-2}\prod_{\substack{j\neq s\\ 0\leq j\leq k-1}}a_j^{k-1}}{(k-1)!}$
\end{flushleft}
\begin{flushright} 
$=\frac{\sum_{s=1}^{k-1}a_s^{k-2}\prod_{\substack{j\neq s\\ 0\leq j\leq k}}a_j^{k-1}}{(k-1)!}.\ \ \ \ \ \ \ \ \ \ \ \ $
\end{flushright}
Now in view of (\ref{eqn2}), we get $N_A^p(a_1a_2\cdots a_kl+r)-N_A^p(a_1a_2\cdots a_k(l-1)+r)$ as a polynomial in $l$ of degree $k-1$ with the leading coefficient
\begin{align*}
\frac{\sum_{s=1}^{k-1}a_s^{k-2}\prod_{\substack{j\neq s\\ 0\leq j\leq k}}a_j^{k-1}}{(k-1)!}+\frac{a_k^{k-2}(a_1a_2\cdots a_{k-1})^{k-1}}{(k-1)!}=\frac{\sum_{i=1}^{k}a_i^{k-2}\prod_{\substack{j\neq i\\ 0\leq j\leq k}}a_j^{k-1}}{(k-1)!}.
\end{align*} 
Let $c_k$ be the leading coefficient of $N_A^p(a_1a_2\cdots a_kl+r)$. Then from the above observations, we have
\[kc_k=\frac{\sum_{i=1}^{k}a_i^{k-2}\prod_{\substack{j\neq i\\ 0\leq j\leq k}}a_j^{k-1}}{(k-1)!},
\] 
which gives 
\[c_k=\frac{\sum_{i=1}^{k}a_i^{k-2}\prod_{\substack{j\neq i\\ 0\leq j\leq k}}a_j^{k-1}}{k!}.
\] 
Hence, the main assertion is true for $k$. Then the targeted estimate follows immediately while adhering to the discussion at the starting part of this proof. 
\end{proof}

\noindent {\bf Acknowledgement.} I would like to thank the editor and the referee for their constructive comments which helped improve the presentation and content of this paper. 
\bibliographystyle{amsplain}

\end{document}